\def\1{\mathbbm{1}}
\newtheorem{theorem}{Theorem}[section]
\newtheorem{proposition}[theorem]{Proposition}
\newtheorem{lemma}[theorem]{Lemma}
\newtheorem{example}[theorem]{Example}
\newtheorem{definition}[theorem]{Definition}
\newtheorem{remark}[theorem]{Remark}
\numberwithin{equation}{section}
\newcommand{\T}{\mathbb {T}}
\newcommand{\A}{\mathbb {A}}
\newcommand{\C}{\mathbb {C}}
\newcommand{\R}{\mathbb {R}}
\newcommand{\F}{\mathbb {F}}
\newcommand{\D}{\mathbb {D}}
\newcommand{\Pp}{\mathbb {P}}
\newcommand{\Ss}{\mathbb {S}}
\newcommand{\calP}{\mathcal {P}}
\newcommand{\calL}{\mathcal {L}}
\newcommand{\calR}{\mathcal {R}}
\newcommand{\al}{\alpha}
\newcommand{\ga}{\gamma}
\newcommand{\thh}{\theta}
\newcommand{\la}{\lambda}
\newcommand{\si}{\sigma}
\newcommand{\Om}{\Omega}
\newcommand{\om}{\omega}
\sloppy \pagestyle{plain} \textwidth=16cm \textheight=22.5cm
\title{On the maximal regularity for a classe of Volterra integro-differential equations}
\author{A. AMANSAG, H. BOUNIT, A. DRIOUICH, and S. HADD}
\address{Department of Mathematics, Faculty of Sciences, Ibn Zohr University, Hay Dakhla, BP8106, 80000--Agadir, Morocco; ahmed.amansag@uiz.ac.ma, h.bounit@uiz.ac.ma, a.driouich@uiz.ac.ma, s.hadd@uiz.ac.ma}
\thanks{02/10/2020 Version}
\subjclass[2010]{***(primary), and ***(secondary)}
 \keywords{Operator semigroup, Boundary perturbation, Maximal regularity, Banach space, admissible control, admissible observation, Integro-differential equation}
\begin{document}
\maketitle

\renewcommand{\sectionmark}[1]{}
\begin{abstract}
We propose an approach  based on perturbation theory to establish maximal $L^p$-regularity for a class of Volterra integro-differential equations. As the left shift semigroup is involved for such equations, we study maximal regularity on Bergman spaces for autonomous and non-autonomous integro-differential equations. Our method is based on the formulation of the integro-differential equations to a Cauchy problems, infinite dimensional systems theory and some recent results on the perturbation of maximal $L^p$-regularity (see \cite{AmBoDrHa1,AmBoDrHa2}). Applications to heat equations driven by the Dirichlet (or Neumann)-Laplacian are considered.
 \end{abstract}

\section{Introduction}
In recent years, somewhat more progress on the concept of maximal $L^p$-regularity ($p\in (1,\infty)$)  has been made in the evolution equations literature. This property plays an important role in the well-posedness of  nonlinear evolution equations, quasilinear ones and non-autonomous evolution ones. Various approaches have been proposed for the concept of maximal $L^p$-regularity, we cite the variational approach e.g. \cite{Lions61}, the operator one e.g. \cite{ArRaFoPo}, \cite{KunstmannWeis}, and the perturbation one e.g. \cite{AmBoDrHa1, AmBoDrHa2}. For more facts on this property, the reader is invited to consult this non-exhaustive list \cite{Dore}, \cite{DenHiePru}, \cite{KunWei}, \cite{KunstmannWeis}, \cite{ArRaFoPo}, \cite{HieMon}, \cite{LutzWeis}, \cite{LutzWeis2} and references therein.

This paper focuses on proving the maximal $L^p$-regularity for some classes of Volterra integro-differential equations using the recent results based on the perturbation approach developed in \cite{AmBoDrHa1, AmBoDrHa2}. On the one hand, the results displayed throughout this article draw from our recent papers \cite{AmBoDrHa1,AmBoDrHa2} where the above equations are studied with kernels $a(\cdot)= 0$, and, on the other hand, from B\'arta \cite{Barta1} where these problems are studied on UMD spaces by using the concept of $R$-sectoriality. One remarkable fact is
that in the context of UMD spaces the left shift semigroup  on Bergman space enjoys the maximal $L^p$-regularity. \\

We first consider in Section \ref{sec:autonomous volterra witout BC} the following autonomous Volterra integro-differential equation
\begin{align}
\label{IACP}
\begin{cases}
\dot{z}(t) = \A z(t)+\displaystyle\int_0^t a(t-s)Fz(s)ds +f(t), & t\ge 0,  \\
z(0) = 0,
\end{cases}
\end{align}
where $A:D(A)\subset X\to X$ is the generator of a $C_0$-semigroup $\T:=(\T(t))_{t\ge 0}$ on a Banach space $X$ and $F:D(A)\to X$ a linear operator, and $a:\C\to \C$ and $f:[0,\infty)\to X$ are measurable functions.

In a suitable product space, the previous problem is reformulated as the following non-homogeneous problem
\begin{align*}
\begin{cases}
\dot{\varrho}(t) = \mathfrak{A} \varrho(t) + \zeta(t), & t\ge 0, \\
\varrho(0) = (\begin{smallmatrix} 0\\ 0\end{smallmatrix}),
\
\end{cases}
\end{align*}
where $\mathfrak{A}$ is a matrix operator (see Section \ref{sec:autonomous volterra witout BC}). Using a recent perturbation result of maximal $L^p$-regularity, we prove, under assumptions, that the operator $\mathfrak{A}$ has maximal $L^p$-regularity and we give an estimate for the solution of the problem \eqref{IACP}.

In \cite{Pruss}, the author studied maximal $L^p$-regularity of type $C^\alpha$ of \eqref{IACP}, which differs from the maximal $L^p$-regularity of type $L^p$ presented in this paper. Here we use a direct approach in the treatment of \eqref{IACP} without appealing the concept of $\kappa$-regular kernels as in the paper \cite{Zacher}.

In Section \ref{sec:volterra with BC}, we study the maximal $L^p$-regularity for Volterra integro-differential equations with boundary conditions of the form
\begin{align}\label{InDiffwBC}
\begin{cases}
\dot{z}(t)=A_m z(t)+\int_0^t a(t-s) P z(s) ds + f(t),& t\in [0,T],\cr
z(0)=0,\cr Gz(t)=Kz(t),& t\in [0,T],
\end{cases}
\end{align}
where $A_m:Z\to X$ is a closed linear operator with $Z$ is a Banach space that is densely and continuously embedded in the Banach space $X$. $G, K: Z\to U$ are linear operators with $U$ another Banach space. $P:Z\to X$ is a linear operator.

In \cite{AmBoDrHa1} and \cite{AmBoDrHa2}, the authors studied the problem \eqref{InDiffwBC} in the case $a(\cdot) = 0$ using the feedback theory of infinite dimensional linear systems.

In order to prove maximal $L^p$-regularity of \eqref{InDiffwBC}, we reformulate the problem as

\begin{equation*}
\begin{cases}
\dot{\rho}(t) = \mathfrak{A}\rho(t)+\zeta(t) , & \mbox{ } t\in [0, T] \\
\rho(0)=0, & \mbox{}
\
\end{cases}
\end{equation*}
where $\mathfrak{A}$ is some matrix operator(see Section \ref{sec:volterra with BC}). Using results from \cite{AmBoDrHa2}, we prove, under some assumptions, that $\mathfrak{A}$ has maximal $L^p$-regularity and we derive a useful estimate satisfied by the solution of \eqref{InDiffwBC}.

Section \ref{sec:Max Reg review} is devoted to recall the definition of maximal $L^p$-regularity and a useful perturbation result. In Section \ref{sec:autonomous volterra witout BC}, we study well-posedness and maximal $L^p$-regularity of evolution equations \eqref{IACP}. In Section \ref{sec:volterra with BC}, we review some useful results on feedback theory of infinite dimensional systems and prove maximal $L^p$-regularity of \eqref{InDiffwBC} under suitable assumptions.

\section{Background on maximal $L^p$-regularity for Cauchy problems}
\label{sec:Max Reg review}
In this section, we collect necessary background on maximal $L^p$-regularity that will be used in this paper. Let $\mathscr{X}$ be a Banach space with norm $\Vert \cdot \Vert,$ $p\in (1,\infty)$ a real number, and $\mathscr{A}:D(\mathscr{A})\subset \mathscr{X}\to \mathscr{X}$  a closed linear operator.
\begin{definition}
	We say that $\mathscr{A}$ has the maximal $L^p$-regularity, and we write
	$\mathscr{A}\in MR_p(0, T; \mathscr{X})$, if for every $f\in L^p([0,T],\mathscr{X})$ there exists a unique  $u\in W^{1,p}([0,T],\mathscr{X})\cap L^p([0,T],D(\mathscr{A}))$ such that
	\begin{equation*}
	\dot{u} (t)  = \mathscr{A}u(t) + f(t) \quad \text{ for } t\in [0,T] \quad \text{ and } u(0) = 0.
	\end{equation*}
\end{definition}
By "maximal" we mean that the applications $f$, $\mathscr{A} u$ and $\dot{u}$ have the same regularity. According to  the closed graph theorem,  if  $\mathscr{A}$ has maximal $L^p$-regularity,
\begin{equation}
\label{max_reg_estim}
\Vert \dot{u}\Vert_{L^p([0,T],\mathscr{X})}+\Vert u\Vert_{L^p([0,T],\mathscr{X})}+\Vert \mathscr{A} u\Vert_{L^p([0,T],\mathscr{X})}\leq \kappa \Vert f\Vert_{L^p([0,T],\mathscr{X})}
\end{equation}
for a constant $\kappa:=\kappa(p)>0$ independent of $f$.\\
It is known that a necessary condition for the maximal $L^p$-regularity is that $\mathscr{A}$ generates an analytic semigroup $\mathscr{T}:=(\mathscr{T}(t))_{t\ge 0}$. This condition is also sufficient if $\mathscr{X}$ is a Hilbert space, see De Simon \cite{Simon}. Moreover,  it is shown in \cite{Dore} that if  $\mathscr{A}$ has maximal $L^p$-regularity for one $p\in [1,\infty]$ then $\mathscr{A}$ has maximal $L^q$-regularity for all $q\in]1,\infty[$.

Next we recall a perturbation result on maximal $L^p$-regularity. To this end, we need the following concept.
\begin{definition}\label{obs-admi}
  let $\mathscr{A}$ be the generator of a strongly continuous semigroup $(\mathscr{T}(t))_{t\ge 0}$ on a Banach space $\mathscr{X}$, and let $\mathscr{Y}$ be another Banach space. An operator $\mathscr{C}\in \mathcal{L}(D(\mathscr{A}),\mathscr{Y})$ is called $p$-admissible observation operator for $\mathscr{A}$, if there exist (hence all) $\alpha >0$ and a constant $\gamma:=\ga(\al)>0$ such that:
		\begin{equation}\label{walid}
		\int_0^\alpha \Vert \mathscr{C}\mathscr{T}(t)x\Vert_{\mathscr{Y}}^p dt \leq \ga^p \Vert x \Vert^p,
		\end{equation}
		for all $x\in D(\mathscr{A})$. We also say that $(\mathscr{C},\mathscr{A})$ is $p$-admissible.
\end{definition}

The following theorem gives an invariance result of the maximal $L^p$-regularity, see \cite{AmBoDrHa1}.
\begin{theorem}\label{theorem:autonomous Miyadera}
	If $\mathscr{P}\in \calL(D(\mathscr{A}),\mathscr{X})$ is $l$-admissible for $\mathscr{A}$ for some $l>1$ and $\mathscr{A}$ has maximal $L^p$-regularity
	then the operator $\mathscr{A}+\mathscr{P}:D(\mathscr{A})\to \mathscr{X}$ is so.
\end{theorem}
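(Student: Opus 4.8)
The plan is to realize the solution of the perturbed problem through a Neumann series built from the solution operator of $\mathscr{A}$, using the admissibility precisely to turn the relevant perturbation operator into a strict contraction on a short time interval. Since maximal $L^p$-regularity is independent of $p\in(1,\infty)$ (as recalled above), it suffices to establish maximal $L^q$-regularity of $\mathscr{A}+\mathscr{P}$ for one conveniently chosen exponent; I fix $q\in(1,l]$ (e.g. $q=l$). Because $\mathscr{A}$ has maximal regularity it generates an analytic semigroup $\mathscr{T}$, and for every $\tau>0$ the solution operator
$\mathscr{S}_\tau\colon L^q([0,\tau],\mathscr{X})\to W^{1,q}([0,\tau],\mathscr{X})\cap L^q([0,\tau],D(\mathscr{A}))$, $(\mathscr{S}_\tau g)(t)=\int_0^t\mathscr{T}(t-s)g(s)\,ds$, is bounded with a constant $\ka$ that may be taken uniform for $\tau\le T_0$. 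A function $u$ solves $\dot u=(\mathscr{A}+\mathscr{P})u+f$, $u(0)=0$ on $[0,\tau]$ if and only if $g:=\mathscr{P}u+f$ satisfies $(I-\mathscr{P}\mathscr{S}_\tau)g=f$ in $L^q([0,\tau],\mathscr{X})$ and $u=\mathscr{S}_\tau g$; everything thus reduces to inverting $I-\mathscr{P}\mathscr{S}_\tau$.

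The heart of the matter, and the step I expect to be the main obstacle, is to show that $\mathscr{P}\mathscr{S}_\tau$ becomes a strict contraction for small $\tau$. Writing $u=\mathscr{S}_\tau g$, so that $\mathscr{P}u(t)=\int_0^t\mathscr{P}\mathscr{T}(t-s)g(s)\,ds$ (first for $g$ with values in $D(\mathscr{A})$, then by density), I would estimate, for $\tau\le\al$,
\begin{align*}
\|\mathscr{P} u\|_{L^q([0,\tau],\mathscr{X})}
&\le \int_0^\tau\Big(\int_s^\tau\|\mathscr{P}\mathscr{T}(t-s)g(s)\|^q\,dt\Big)^{1/q}\,ds\\
&\le \ga\,\tau^{\frac1q-\frac1l}\int_0^\tau\|g(s)\|\,ds
\le \ga\,\tau^{1-\frac1l}\,\|g\|_{L^q([0,\tau],\mathscr{X})},
\end{align*}
where the first inequality is Minkowski's integral inequality, the second combines Hölder in $t$ (downgrading $L^q$ to $L^l$ on an interval of length $\le\tau$, legitimate since $q\le l$) with the admissibility bound \eqref{walid} applied for the \emph{frozen} vector $g(s)$, and the last is Hölder in $s$. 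The decisive subtlety here is that $l$-admissibility is an integrated bound on the orbit $t\mapsto\mathscr{P}\mathscr{T}(t)x$ for a fixed $x$, \emph{not} a bound on the operator norm $\|\mathscr{P}\mathscr{T}(t)\|$ (which need not even be integrable); one must therefore keep $g(s)$ fixed inside the time integral before invoking \eqref{walid}, which is exactly what the above arrangement of exponents does.

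Since $l>1$, the exponent $1-\tfrac1l$ is strictly positive, so choosing $\tau_0\le\min(\al,T_0)$ with $\ga\,\tau_0^{1-1/l}<1$ makes $\|\mathscr{P}\mathscr{S}_{\tau_0}\|_{\calL(L^q([0,\tau_0],\mathscr{X}))}<1$, whence $I-\mathscr{P}\mathscr{S}_{\tau_0}$ is boundedly invertible by the Neumann series. For $f\in L^q([0,\tau_0],\mathscr{X})$ I then set $g:=(I-\mathscr{P}\mathscr{S}_{\tau_0})^{-1}f$ and $u:=\mathscr{S}_{\tau_0}g\in W^{1,q}([0,\tau_0],\mathscr{X})\cap L^q([0,\tau_0],D(\mathscr{A}))$; this $u$ is the unique solution of the perturbed Cauchy problem with $u(0)=0$, and the estimate \eqref{max_reg_estim} follows from $\|u\|\le\ka\|g\|\le\ka\,\|(I-\mathscr{P}\mathscr{S}_{\tau_0})^{-1}\|\,\|f\|$. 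Hence $\mathscr{A}+\mathscr{P}\in MR_q(0,\tau_0;\mathscr{X})$.

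It remains to pass from the short interval $[0,\tau_0]$ to an arbitrary $[0,T]$. The maximal $L^q$-regularity just obtained forces, via the necessary condition recalled above, $\mathscr{A}+\mathscr{P}$ to generate an analytic semigroup; a standard concatenation argument then solves successively on the subintervals $[k\tau_0,(k+1)\tau_0]$, the nonzero initial values at the splice points lying in the trace space $(\mathscr{X},D(\mathscr{A}))_{1-1/q,q}$ and being absorbed by the analytic semigroup. This yields $\mathscr{A}+\mathscr{P}\in MR_q(0,T;\mathscr{X})$ for every $T$, and by the $p$-independence of maximal regularity we recover $\mathscr{A}+\mathscr{P}\in MR_p(0,T;\mathscr{X})$ for the original $p$, completing the argument.
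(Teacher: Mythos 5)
Your argument is correct, but note that there is nothing in the paper to compare it against: Theorem \ref{theorem:autonomous Miyadera} is not proved here, it is recalled verbatim from \cite{AmBoDrHa1}. Your route is essentially the argument behind that cited result and is fully consistent with the toolkit the present paper uses elsewhere: reduction to a single exponent $q\in(1,l]$ via Dore's $p$-independence \cite{Dore}, reformulation of the perturbed problem as the fixed-point equation $(I-\mathscr{P}\mathscr{S}_\tau)g=f$, and the Minkowski--H\"older--admissibility estimate $\|\mathscr{P}\mathscr{S}_\tau g\|_{L^q}\le \ga\,\tau^{1-1/l}\|g\|_{L^q}$. In fact, the content of that estimate (a contraction constant tending to $0$ with $\tau$) is precisely \cite[Prop 3.3]{Hadd}, which this paper itself invokes in the proof of Theorem \ref{good-thm}, and you correctly isolated the one delicate point: \eqref{walid} bounds orbits $t\mapsto\mathscr{P}\mathscr{T}(t)x$ for frozen $x$, not operator norms, so $g(s)$ must stay fixed inside the inner time integral. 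The only step you should make explicit is the closedness of $\mathscr{A}+\mathscr{P}$ on $D(\mathscr{A})$: the implication ``maximal regularity implies analytic generation'' and Dore's independence theorems, as you invoke them in the concatenation step, are statements about closed operators, and closedness also guarantees (via the open mapping theorem) that the graph norms of $\mathscr{A}$ and $\mathscr{A}+\mathscr{P}$ on $D(\mathscr{A})$ are equivalent, which your trace-space argument tacitly uses. This is not a real obstruction: H\"older applied to \eqref{walid} gives $\int_0^\al\|\mathscr{P}\mathscr{T}(t)x\|\,dt\le \ga\,\al^{1-1/l}\|x\|$ with $\ga\,\al^{1-1/l}<1$ for $\al$ small, so the Miyadera--Voigt theorem shows that $\mathscr{A}+\mathscr{P}$ generates a $C_0$-semigroup, hence is closed --- but one sentence to this effect should be added.
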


\section{Maximal regularity for Volterra integro-differential equations}\label{sec:autonomous volterra witout BC}
In this section, we study the maximal $L^p$-regularity of the autonomous Volterra integro-differential equation \eqref{IACP}. First, certain conventions are defined. Let the Banach product space
\begin{align*}
\mathscr{X}:=X\times L^q(\R^+,X)\quad\text{with norm}\quad \left\|(\begin{smallmatrix} x\\ f\end{smallmatrix})\right\|:=\|x\|+\|f\|_q.
\end{align*}
Let us define the left shift semigroup on $L^q(\R^+,X)$ by
\begin{align*}
(\Ss(t)f)(s)=f(t+s),\qquad t,s\ge 0.
\end{align*}
We select
\begin{align}\label{matrix-opetra1}\begin{split}
& \Upsilon x := a(\cdot)Fx,\qquad x\in D(\A),\\
&\mathfrak{A} :=\left(\begin{array}{cc}
\A & \delta_0\\
\Upsilon & \frac{d}{ds}
\end{array}\right),\qquad
D(\mathfrak{A})=D(\A)\times D(\frac{d}{ds}).
\end{split}
\end{align}
Moreover, we consider the function
\begin{align}\label{zeta-function}
\zeta: [0,\infty)\to \mathscr{X},\quad\zeta(t)=\left(\begin{smallmatrix} f(t)\\ 0\end{smallmatrix}\right),\qquad t\in [0,T].
\end{align}

Let $z:[0,\infty)\to X$ satisfies \eqref{IACP}. According to \cite[Section VI.7]{EngNag}, the Volterra equation \eqref{IACP} can be reformulated as the following non-homogeneous Cauchy problem on $\mathscr{X},$
\begin{align}
\label{ACP1}
\begin{cases}
\dot{\varrho}(t) = \mathfrak{A} \varrho(t) + \zeta(t), & t\in [0,T], \\
\varrho(0) = (\begin{smallmatrix} 0\\ 0\end{smallmatrix}),
\
\end{cases}
\end{align}
where
\begin{align*}
\varrho(t)=\big(\begin{smallmatrix} z(t)\\ g(t, \cdot)\end{smallmatrix}\big),\qquad g(t,\cdot)=\mathbb{S}(t)g(0,\cdot)+\int^t_0 \mathbb{S}(t-s)\Upsilon z(s) ds,\qquad t\ge 0.
\end{align*}
In order to study the maximal $L^p$-regularity of \eqref{IACP}, it suffices to study the one of the linear operator $\mathfrak{A}$. To this end, we will use Theorem \ref{theorem:autonomous Miyadera}. In fact, we first split the operator $\mathfrak{A}$ as
\begin{align}\label{split-A}
\mathfrak{A}=\mathscr{A}+\mathscr{P}
\end{align}
where
\begin{align}\label{calAcalP}
\begin{split}
&\mathscr{A} :=\begin{pmatrix}
\A & 0\\
0 & \frac{d}{ds}
\end{pmatrix},\quad
D(\mathscr{A})=D(\A)\times W^{1,q}(\R^+,X),\cr
&\mathscr{P} :=\begin{pmatrix}
0 & \delta_0\\
\Upsilon & 0
\end{pmatrix},\quad
D(\mathscr{P})=D(\mathscr{A}).
\end{split}
\end{align}
Clearly, the operator $\mathscr{A}$ generates the following $C_0$-semigroup on $\mathscr{X},$
\begin{align*}
\mathscr{T}(t)=\begin{pmatrix}\T(t)&0\cr 0& \Ss(t)\end{pmatrix},\qquad t\ge 0.
\end{align*}
Moreover, $\mathscr{T}$ is not an analytic semigroup on $\mathscr{X}$ even if we assume that the semigroup $\T$ is analytic on $X$. This is due to the fact that the left shift semigroup $\mathbb{S}:=(\Ss(t))_{t\ge 0}$ is not analytic in $L^q(\R^+,X)$. This means that $\mathscr{A}$ has not the maximal $L^p$-regularity on the space $\mathscr{X}$. To overcome this problem, one way is to look for subspaces of $L^q(\R^+,X)$ in which the shift semigroup $\mathbb{S}$ is analytic. As shown  \cite{Barta1} and \cite{Barta2} a perfect space in which the shift semigroup is analytic is the Begrman space which we define as follows:

\begin{definition}\label{Def-Bergman-space}
  For $q\in(1,\infty)$, we define the {\em Bergman space} of holomorphic $L^q$-integrable functions by:
\begin{equation*}
B^q(\Sigma_\theta;X):=\left\{f:\Sigma_\theta\rightarrow X\ \text{holomorphic}\ ;\ \int_{\Sigma_\theta}\Vert f(\tau+i\si)\Vert_{X}^q d\tau d\si<\infty\right\}.
\end{equation*}
where 
\begin{equation*}
	\Sigma_\theta:=\left\{\la\in \mathbb{C}:|\arg(\la)|<\theta \right\},\qquad 0<\theta\le \frac{\pi}{2}.
\end{equation*}
This space is also defined by $B_{\theta,X}^q$. Moreover, if $X=\mathbb{C}$, then we write $B_{\theta}^q$ instead of  $B_{\theta,\mathbb{C}}^q$.
\end{definition}
The space $B_{\theta,X}^q$, endowed with the following norm
\begin{equation*}
\|f\|_{B_{\theta,X}^q}:=\left(\int_{\Sigma_\theta}\Vert f(\tau+i\si)\Vert_{X}^q d\tau d\si\right)^\frac{1}{q},
\end{equation*}
is a Banach space.

The proof of the following result can be found in \cite{Barta2} and \cite{Barta3}.
\begin{proposition}\label{analytic-shift}
  The complex derivative $\displaystyle{\frac{d}{dz}}$ with its natural domain :
$$
D\left(\frac{d}{dz}\right):=\left\{ f\in B_{\theta,X}^q; f' \in B_{\theta,X}^q\right\}
$$
generates an analytic semigroup of translation on $B_{\theta,X}^q$. Furthermore, if $X$ is an UMD space, then $\displaystyle{\frac{d}{dz}}$ enjoys the maximal $L^p$-regularity on $B_{\frac{\pi}{2},X}^q$.
\end{proposition}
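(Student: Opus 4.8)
The plan is to realize the complex derivative as the generator of an explicit translation family and then feed its analyticity into the Weis $R$-sectoriality machinery. For $\lambda\in\Sigma_\theta$ I would set $(T(\lambda)f)(z):=f(z+\lambda)$. Since $\theta\le\pi/2$, the sector $\Sigma_\theta$ is a convex cone, hence $\Sigma_\theta+\lambda\subseteq\Sigma_\theta$ for every $\lambda\in\Sigma_\theta$; consequently $f(\cdot+\lambda)$ is again holomorphic on $\Sigma_\theta$, and since planar Lebesgue measure is translation invariant, the change of variables $w=z+\lambda$ together with the inclusion of domains gives $\|T(\lambda)f\|_{B_{\theta,X}^q}\le\|f\|_{B_{\theta,X}^q}$. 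Thus each $T(\lambda)$ is a contraction, $\lambda\mapsto T(\lambda)f$ is holomorphic on $\Sigma_\theta$ (because $f$ is), the law $T(\lambda_1)T(\lambda_2)=T(\lambda_1+\lambda_2)$ holds, and strong continuity as $\lambda\to0$ inside the sector follows from continuity of translation in $L^q$. This exhibits $(T(\lambda))_{\lambda\in\Sigma_\theta}$ as a bounded analytic semigroup of angle $\theta$.

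It then remains to identify the generator with $\frac{d}{dz}$ on the stated natural domain. Writing $f(z+t)-f(z)=\int_0^t f'(z+s)\,ds$ and invoking strong continuity, one sees that $t^{-1}(T(t)f-f)\to f'$ in $B_{\theta,X}^q$ whenever $f'\in B_{\theta,X}^q$, so such $f$ lie in the domain of the generator, which acts as $f\mapsto f'$. Conversely, since $B^q$-convergence forces local uniform convergence of holomorphic functions (point evaluations on a Bergman space are bounded via Cauchy estimates), membership in the domain of the generator forces $f'\in B_{\theta,X}^q$. Hence the generator is exactly $\frac{d}{dz}$ with domain $\{f\in B_{\theta,X}^q:f'\in B_{\theta,X}^q\}$, which proves the first assertion; for $\theta=\pi/2$ the semigroup extends to the whole right half-plane.

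For the regularity statement I would first observe that $B_{\pi/2,X}^q$ is a \emph{closed} subspace of $L^q(\Sigma_{\pi/2},X)$ (closedness again from the fact that Bergman-norm convergence implies local uniform convergence, so the limit stays holomorphic). As $X$ is UMD and $1<q<\infty$, the space $L^q(\Sigma_{\pi/2},X)$ is UMD, and the UMD property passes to closed subspaces (a $B^q$-valued martingale transform is a restriction of the ambient one), so $B_{\pi/2,X}^q$ is UMD. By Weis's theorem, an operator generating a bounded analytic semigroup on a UMD space has maximal $L^p$-regularity iff it is $R$-sectorial of angle $<\pi/2$; the problem thus reduces to the $R$-sectoriality of $\frac{d}{dz}$ on $B_{\pi/2,X}^q$, whose resolvent is the Laplace transform $R(\lambda,\frac{d}{dz})f=\int_0^\infty e^{-\lambda t}f(\cdot+t)\,dt$ for $\mathrm{Re}\,\lambda>0$.

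The $R$-sectoriality is the crux and the expected main obstacle. I would diagonalize the derivative by a Fourier transform in the direction transverse to the translation: the partial Fourier transform in the imaginary variable converts holomorphy into a one-sided spectral condition and turns $\frac{d}{dz}$ into multiplication by the spectral variable $s\le 0$ and each $T(t)$ into the pointwise multiplier $e^{ts}$, so $\{\lambda R(\lambda,\frac{d}{dz})\}$ becomes a family of Fourier multipliers with uniformly bounded symbols $\lambda(\lambda-s)^{-1}$ for $\lambda$ in a sector of angle $<\pi/2$. The $R$-boundedness of such a multiplier family is exactly where the UMD hypothesis is indispensable, through the vector-valued Mikhlin--Weis multiplier theorem. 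For $q=2$ this is transparent: the Paley--Wiener isometry makes $\frac{d}{dz}$ unitarily equivalent to multiplication by $s\le 0$ on a weighted $L^2$, and $R$-boundedness of the resolvents follows pointwise from Kahane's contraction principle. The genuinely delicate point for general $q$ is justifying the transference to an honest multiplier on the Bergman space while controlling the passage between the area measure on $\Sigma_{\pi/2}$ and the transform side; once the resolvents are shown to be $R$-bounded on a sector of angle $<\pi/2$, Weis's theorem delivers maximal $L^p$-regularity on $B_{\pi/2,X}^q$.
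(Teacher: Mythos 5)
The paper itself offers no written proof of this proposition; it is quoted from B\'arta, with the analyticity part coming from \cite{Barta1,Barta3} and the maximal regularity part being precisely the main theorem of \cite{Barta2}. Measured against what a complete proof requires, your first half is correct and essentially complete: the semigroup $(T(\lambda)f)(z)=f(z+\lambda)$ is well defined and contractive because $\Sigma_\theta$ is a convex cone (so $\Sigma_\theta+\lambda\subseteq\Sigma_\theta$), strong continuity follows by extending by zero and using continuity of translations in $L^q(\mathbb{R}^2,X)$, and your identification of the generator with $f\mapsto f'$ on $\{f\in B^q_{\theta,X}:f'\in B^q_{\theta,X}\}$, using boundedness of point evaluations in both directions, is sound. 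This matches the route of \cite{Barta3}.

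The second assertion, however, is not proved in your attempt; it is reduced and then deferred. The reduction itself (closedness of $B^q_{\pi/2,X}$ in $L^q(\Sigma_{\pi/2},X)$, inheritance of UMD by closed subspaces, and Weis's characterization: maximal $L^p$-regularity $\Leftrightarrow$ $R$-sectoriality of angle $<\pi/2$ for generators of bounded analytic semigroups on UMD spaces) is correct and is indeed the strategy of \cite{Barta2}. But the $R$-sectoriality of $\frac{d}{dz}$ on $B^q_{\pi/2,X}$ is the entire content of that theorem, and your sketch of it has two concrete holes. First, the ``diagonalization'' by the partial Fourier transform in the imaginary variable is only valid on the holomorphic subspace: the Cauchy--Riemann equations force $\hat f(\tau,s)=e^{s\tau}\hat f(0,s)$ with support in $s\le 0$, so the resolvent of $\frac{d}{dz}$ is the restriction to that subspace of an ambient Fourier multiplier whose symbol $\lambda(\lambda-s)^{-1}$ must be modified for $s>0$ (it is singular at $s=\lambda>0$) before any multiplier theorem can be invoked; this Paley--Wiener-type identification in the vector-valued $L^q$ setting needs to be stated and verified, not assumed. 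Second, ``uniformly bounded symbols'' is not a sufficient hypothesis: for $q\neq 2$ boundedness of a Fourier multiplier on $L^q(\mathbb{R};Y)$ already requires Mikhlin-type derivative bounds, and what you actually need is the stronger statement that a family of scalar symbols satisfying \emph{uniform} Mikhlin conditions yields an \emph{$R$-bounded} family of multiplier operators on $L^q(\mathbb{R};Y)$ for $Y$ UMD; this $R$-bounded multiplier theorem is exactly where the work lies. Since you explicitly concede the transference for general $q$ (``once the resolvents are shown to be $R$-bounded \ldots Weis's theorem delivers \ldots''), the maximal regularity statement remains conditional in your write-up, whereas the paper discharges it by citing \cite{Barta2}.
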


This result motivated us to replace the state space $\mathscr{X}$ with the following appropriate space
\begin{align*}
\mathscr{X}^q:=X\times B_{\theta,X}^q,\qquad\left\|\left(\begin{smallmatrix} x\\f\end{smallmatrix}\right)\right\|_{\mathscr{X}^q}:=\|x\|_{X}+ \|f\|_{B_{\theta,X}^q}.
\end{align*}
According to Proposition \ref{analytic-shift}, the following result becomes trivial.
\begin{lemma}\label{L1}
Assume that $X$ is an UMD space. If $\A$ has the maximal $L^p$-regularity in $X$, then the operator $\mathscr{A}$ defined in \eqref{calAcalP} has the maximal $L^p$-regularity in $\mathscr{X}^q$.
\end{lemma}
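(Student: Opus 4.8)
The plan is to exploit the block-diagonal structure of $\mathscr{A}=\mathrm{diag}\bigl(\A,\tfrac{d}{dz}\bigr)$ on $\mathscr{X}^q=X\times B_{\theta,X}^q$, which makes the abstract Cauchy problem decouple completely into two independent problems, one in each coordinate. First I would record that, since $D(\mathscr{A})=D(\A)\times D(\tfrac{d}{dz})$ carries the product graph norm, a function $u$ belongs to $W^{1,p}([0,T],\mathscr{X}^q)\cap L^p([0,T],D(\mathscr{A}))$ if and only if it is a pair $u=(u_1,u_2)$ with $u_1\in W^{1,p}([0,T],X)\cap L^p([0,T],D(\A))$ and $u_2\in W^{1,p}([0,T],B_{\theta,X}^q)\cap L^p([0,T],D(\tfrac{d}{dz}))$; this is immediate because the norm on $\mathscr{X}^q$ is the sum of the two component norms.

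Next, given $f\in L^p([0,T],\mathscr{X}^q)$, I would write $f=(f_1,f_2)$ with $f_1\in L^p([0,T],X)$ and $f_2\in L^p([0,T],B_{\theta,X}^q)$, and observe that the equation $\dot u=\mathscr{A}u+f$, $u(0)=0$, is equivalent to the two uncoupled problems
\[
\dot u_1=\A u_1+f_1,\quad u_1(0)=0,\qquad\qquad \dot u_2=\tfrac{d}{dz}u_2+f_2,\quad u_2(0)=0.
\]
I would then solve each one separately using an already available maximal regularity result. The hypothesis gives $\A\in MR_p(0,T;X)$, so the first problem has a unique solution $u_1$ in the required regularity class; since $X$ is UMD, Proposition \ref{analytic-shift} gives that $\tfrac{d}{dz}$ enjoys maximal $L^p$-regularity on the Bergman space, so the second problem has a unique solution $u_2$ in its class. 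Setting $u=(u_1,u_2)$ produces a solution of the original problem, and uniqueness for $\mathscr{A}$ follows at once from uniqueness in each coordinate; the a priori estimate \eqref{max_reg_estim} for $\mathscr{A}$ is obtained simply by adding the two componentwise estimates.

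The only point needing a word of care is the sector angle: Proposition \ref{analytic-shift} states the maximal $L^p$-regularity of $\tfrac{d}{dz}$ explicitly on $B_{\frac{\pi}{2},X}^q$, so the lemma is to be read with $\theta=\tfrac{\pi}{2}$ (equivalently, one first checks that the translation semigroup retains maximal regularity on the relevant sector). Beyond this there is no genuine obstacle: maximal $L^p$-regularity of a direct sum of operators reduces to that of each summand, which is exactly why the statement is trivial once Proposition \ref{analytic-shift} is in hand.
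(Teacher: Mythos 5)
Your proposal is correct and matches the paper's intent exactly: the paper offers no written proof, declaring the lemma ``trivial'' in view of Proposition \ref{analytic-shift}, and the implicit argument is precisely your componentwise decoupling of the block-diagonal operator $\mathscr{A}$. Your remark about the sector angle is also well taken --- Proposition \ref{analytic-shift} only covers $\theta=\frac{\pi}{2}$, which is why the paper later assumes maximal regularity of $\frac{d}{dz}$ on $B^q_{\theta,X}$ as a separate hypothesis in Theorem \ref{good-thm}.
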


Now if $\displaystyle\frac{d}{dz}$ has the maximal $L^p$-regularity on $B_{\theta,X}^q$ for some $\theta\in (0,\displaystyle\frac{\pi}{2}]$ then according to Theorem \ref{theorem:autonomous Miyadera} and the decomposition \eqref{split-A}, $\mathfrak{A}$ will have the  maximal $L^p$-regularity on $\mathscr{X}^q$ as long as one proves that the perturbation $\mathscr{P}$ is $l$-admissible for $\mathscr{A}$ for some $l\in (1,\infty)$.

We have the following technical result.
\begin{lemma}\label{L2}
Assume that $a(\cdot)\in B^q_\theta$ for $0<\theta\leq\frac{\pi}{2}$ and let $F\in\calL(D(\A),X)$ be $p$-admissible for $\A$. Then
\begin{align}\label{last-estimate}
\int^\alpha_0 \left\|\mathscr{P}\mathscr{T}(t)(\begin{smallmatrix} x\\f\end{smallmatrix})\right\|^pdt\le
2^{p-1}\left( \|a\|_{B_{\theta}^q}^p\ga^p \|x\|^p_X+\int^\alpha_0\|f(t)\|^pdt\right)
\end{align}
$\al>0$ and $(\begin{smallmatrix} x\\f
\end{smallmatrix})\in D(\mathscr{A})$.
\end{lemma}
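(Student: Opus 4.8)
The plan is to reduce \eqref{last-estimate} to an explicit computation of $\mathscr{P}\mathscr{T}(t)$ followed by the scalar convexity inequality and the admissibility hypothesis on $F$; there is no deep obstacle, only two bookkeeping points that need care.

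First I would evaluate the integrand. Since $\mathscr{T}(t)$ is diagonal, $\mathscr{T}(t)\left(\begin{smallmatrix} x\\ f\end{smallmatrix}\right)=\left(\begin{smallmatrix} \T(t)x\\ \Ss(t)f\end{smallmatrix}\right)$, and applying the off-diagonal $\mathscr{P}$ from \eqref{calAcalP} gives
\begin{align*}
\mathscr{P}\mathscr{T}(t)\left(\begin{smallmatrix} x\\ f\end{smallmatrix}\right)=\left(\begin{smallmatrix} \delta_0\Ss(t)f\\ \Upsilon\T(t)x\end{smallmatrix}\right)=\left(\begin{smallmatrix} f(t)\\ a(\cdot)F\T(t)x\end{smallmatrix}\right),
\end{align*}
using $(\Ss(t)f)(0)=f(t)$ and $\Upsilon y=a(\cdot)Fy$. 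Here the first bookkeeping point is that $\delta_0\Ss(t)f$ is the evaluation of $f$ at the interior point $t$ on the positive real axis, hence a legitimate bounded functional on the Bergman space.

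The second, and really the only substantive, point is the factorization of the Bergman norm of the second component. For fixed $t$ the function $a(\cdot)F\T(t)x$ is the scalar holomorphic function $a$ times the frozen vector $F\T(t)x\in X$, so
\begin{align*}
\|a(\cdot)F\T(t)x\|_{B_{\theta,X}^q}=\left(\int_{\Sigma_\theta}|a(\tau+i\si)|^q\,d\tau\,d\si\right)^{1/q}\|F\T(t)x\|_X=\|a\|_{B_{\theta}^q}\,\|F\T(t)x\|_X,
\end{align*}
which is where the hypothesis $a(\cdot)\in B_\theta^q$ enters. Consequently $\left\|\mathscr{P}\mathscr{T}(t)\left(\begin{smallmatrix} x\\ f\end{smallmatrix}\right)\right\|_{\mathscr{X}^q}=\|f(t)\|_X+\|a\|_{B_\theta^q}\|F\T(t)x\|_X$.

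Finally I would raise to the power $p$, apply the convexity bound $(A+B)^p\le 2^{p-1}(A^p+B^p)$ for $A,B\ge0$, and integrate over $[0,\al]$ to get
\begin{align*}
\int_0^\al\left\|\mathscr{P}\mathscr{T}(t)\left(\begin{smallmatrix} x\\ f\end{smallmatrix}\right)\right\|^p dt\le 2^{p-1}\left(\|a\|_{B_\theta^q}^p\int_0^\al\|F\T(t)x\|^p dt+\int_0^\al\|f(t)\|^p dt\right),
\end{align*}
after which the $p$-admissibility of $F$ for $\A$ (Definition \ref{obs-admi}) supplies $\ga>0$ with $\int_0^\al\|F\T(t)x\|^p dt\le\ga^p\|x\|_X^p$. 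Substituting this bound yields exactly \eqref{last-estimate}. I expect no genuine difficulty: the whole argument is the triangle/convexity inequality combined with the norm factorization and the admissibility of $F$, the mildest subtlety being the well-definedness of the point evaluation $\delta_0\Ss(t)f$ on the Bergman space.
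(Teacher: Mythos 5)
Your proof is correct and follows essentially the same route as the paper's: compute $\mathscr{P}\mathscr{T}(t)\left(\begin{smallmatrix} x\\ f\end{smallmatrix}\right)$ componentwise, use the exact factorization $\|a(\cdot)F\T(t)x\|_{B^q_{\theta,X}}=\|a\|_{B^q_{\theta}}\|F\T(t)x\|_X$, apply the convexity bound $(A+B)^p\le 2^{p-1}(A^p+B^p)$, and finish with the $p$-admissibility of $F$ for $\A$. The only difference is cosmetic: you make explicit the pointwise identity $\delta_0\Ss(t)f=f(t)$ and its well-definedness on the Bergman space, which the paper leaves implicit.
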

\begin{proof}
Let  $\al>0$ and $(\begin{smallmatrix} x\\f
\end{smallmatrix})\in D(\mathscr{A}),$ we have
\begin{align*}
\int^\alpha_0 \left\|\mathscr{P}\mathscr{T}(t)(\begin{smallmatrix} x\\f\end{smallmatrix})\right\|^pdt&\le 2^{p-1}\left(\int^\alpha_0 \|\Upsilon\T(t)x\|^p_{B_{\theta}^q}dt+\int^\alpha_0\|f(t)\|^pdt\right)\cr & \le  2^{p-1}\left( \|a\|_{B_{\theta}^q}^p\int^\al_0  \|F\T(t)x\|_{X}^p dt+\int^\alpha_0\|f(t)\|^p dt\right).
\end{align*}
Now the estimate \eqref{last-estimate} immediately follows the $p$-admissible of $F$ for $\A.$
\end{proof}

According to Lemma \ref{L2}, To prove the $p$-admissibility of $\mathscr{P}$ for $\mathscr{A}$ for some $p\in (1,\infty)$ it suffices to estimate the $L^p$ norm of $f$ by its norm on the Bergman space $B_{\theta,X}^q$. To this end, we need the following lemma inspirited from \cite[lem.4.3]{Barta3} (here we slightly modify the result proved in \cite[lem.4.3]{Barta3} and give a sharp estimate.)
\begin{lemma}\label{lemma:bergman_estimate}
	Let $s\in (1,2)$ and  $q>2$. For $p_{s,q}: = \displaystyle{\frac{q(s-1)}{s}}\;(>1)$ and $f\in B^q_{\thh,X}$ for some $\theta\in (0,\displaystyle\frac{\pi}{2}]$, we have
	\begin{equation*}
	\left(\int_0^R \|f(t)\|_X^{p_{s,q}} dt \right)^\frac{1}{{p_{s,q}}}\leq C_R \|f\|_{B^q_{\thh,X}},
	\end{equation*}
	for all $R>0$ and $C_R>0$ only depends on $R$ that verifies $C_R\to 0$ as $R\to 0$.
\end{lemma}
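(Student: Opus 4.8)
The plan is to reduce the one--dimensional $L^{p_{s,q}}$ estimate on the ray $(0,R)$ to a pointwise decay bound on $\|f(t)\|_X$ obtained from the mean value property of holomorphic functions, and then simply to integrate. First I would record the elementary geometric fact that for every $t>0$ the open disc $D(t,t\sin\theta)$ is contained in $\Sigma_\theta$: the distance from the point $t$ on the positive real axis to either bounding ray $\arg\lambda=\pm\theta$ equals $t\sin\theta$. I fix the radius $r(t):=t\sin\theta$ (any fixed fraction of it would serve equally well).

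Next, since $f:\Sigma_\theta\to X$ is holomorphic, $\|f(\cdot)\|_X$ satisfies the sub-mean value inequality on every disc compactly contained in $\Sigma_\theta$; applying it on $D(t,r(t))$ and then Hölder's inequality with the pair $(q,q')$ gives
\begin{align*}
\|f(t)\|_X \le \frac{1}{\pi r(t)^2}\int_{D(t,r(t))}\|f(z)\|_X\,dA(z) \le (\pi r(t)^2)^{-1/q}\left(\int_{D(t,r(t))}\|f(z)\|_X^q\,dA(z)\right)^{1/q}.
\end{align*}
Bounding the last area integral by the full Bergman norm and inserting $r(t)=t\sin\theta$ yields the pointwise estimate
\begin{align*}
\|f(t)\|_X \le (\pi\sin^2\theta)^{-1/q}\,t^{-2/q}\,\|f\|_{B^q_{\thh,X}},\qquad t>0.
\end{align*}
I would then raise this to the power $p_{s,q}$ and integrate over $(0,R)$, which reduces everything to the scalar integral $\int_0^R t^{-2p_{s,q}/q}\,dt$. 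The decisive algebraic identity is $1-\tfrac{2p_{s,q}}{q}=\tfrac{2-s}{s}$, strictly positive exactly because $s<2$. Hence the integral converges and equals $\tfrac{s}{2-s}R^{(2-s)/s}$, producing
\begin{align*}
\left(\int_0^R\|f(t)\|_X^{p_{s,q}}\,dt\right)^{1/p_{s,q}} \le (\pi\sin^2\theta)^{-1/q}\left(\frac{s}{2-s}\right)^{1/p_{s,q}}R^{(2-s)/(q(s-1))}\,\|f\|_{B^q_{\thh,X}}.
\end{align*}
Reading off $C_R:=(\pi\sin^2\theta)^{-1/q}(s/(2-s))^{1/p_{s,q}}R^{(2-s)/(q(s-1))}$, the exponent of $R$ is positive, so $C_R\to0$ as $R\to0$, which is the asserted behaviour.

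The routine steps (Hölder and the explicit integration) are harmless; the one requiring genuine care is the first. The main obstacle is to justify the mean value inequality in the vector-valued setting, i.e.\ that $\|f(\cdot)\|_X$ obeys the sub-mean value property for $X$-valued holomorphic $f$. I would handle this either by invoking the Bochner-valued Cauchy integral formula on the circles $\partial D(t,r)$ and averaging over $r\in(0,r(t))$, or, equivalently, by composing with functionals $x^{\ast}\in X^{\ast}$, $\|x^{\ast}\|\le1$: each $x^{\ast}\circ f$ is scalar holomorphic, hence $|x^{\ast}\circ f|$ is subharmonic, and $\|f(\cdot)\|_X=\sup_{\|x^{\ast}\|\le1}|x^{\ast}\circ f|$ is a continuous supremum of subharmonic functions, therefore itself subharmonic. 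A secondary point worth stating explicitly is that $s<2$ (equivalently $p_{s,q}<q/2$) is precisely the integrability threshold at the origin; it is this threshold that simultaneously makes the estimate valid and forces $C_R\to0$, so the hypothesis $s\in(1,2)$ cannot be relaxed.
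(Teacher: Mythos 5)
Your proof is correct, but it takes a genuinely different route from the paper's. The paper (following B\'arta's Lemma 4.3) represents $f(r)$ by the Cauchy integral formula over a contour made of two circular arcs, bounds $\|f(r)\|^{p_{s,q}}$ by the resulting arc integrals, converts the $(t,r)$-integral into an area integral via an explicit change of variables $\psi$ with a Jacobian lower bound $|J_\psi|\ge c_1 x$, and then applies H\"older with the exponent pair $(s',s)$ --- note $p_{s,q}\,s'=q$, which is how the $B^q_{\thh,X}$-norm appears --- with $s<2$ entering through the convergence of $\int_0^{R+\delta}x^{1-s}\,dx$; the case $\theta=\pi/2$ is then handled separately via the continuous embedding $B^q_{\pi/2,X}\subset B^q_{\theta,X}$. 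You instead prove the standard Bergman point-evaluation estimate $\|f(t)\|_X\le(\pi\sin^2\theta)^{-1/q}\,t^{-2/q}\,\|f\|_{B^q_{\thh,X}}$ from the sub-mean value inequality on the discs $D(t,t\sin\theta)\subset\Sigma_\theta$ together with H\"older with exponent $q$, and then integrate, with $s<2$ entering as the integrability at $t=0$ of $t^{-2p_{s,q}/q}$ (since $1-2p_{s,q}/q=(2-s)/s>0$); both arguments end up with the same power $R^{(2-s)/(q(s-1))}$ in $C_R$. Your route is more elementary and more transparent: it yields an explicit constant, isolates exactly where the hypothesis $s\in(1,2)$ is used, and treats $\theta=\pi/2$ uniformly (the disc $D(t,t)$ lies in the open right half-plane, so no separate limiting case is needed), and the pointwise decay bound is in fact stronger information than the integrated statement. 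The one point that genuinely needs justification in the vector-valued setting --- the sub-mean value property of $z\mapsto\|f(z)\|_X$ --- you handle correctly: either by averaging the Bochner-valued Cauchy (mean value) formula over circles, or by the duality argument, where for each $x^*$ with $\|x^*\|\le 1$ one has $|x^*(f(t))|\le\frac{1}{\pi r^2}\int_{D(t,r)}|x^*(f(z))|\,dA\le\frac{1}{\pi r^2}\int_{D(t,r)}\|f(z)\|_X\,dA$, and taking the supremum over $x^*$ on the left gives the inequality directly, without even needing the general fact that a supremum of subharmonic functions is subharmonic.
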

\begin{proof} First, let $\theta\in (0,\displaystyle\frac{\pi}{2})$
	and let us estimate the value of $\|f(t)\|^{p_{s,q}}$ using the Cauchy formula. The integration path will consist of two circle segments (see the next Figure). Let $\ga_1(t) := r-acr +are^{it}$ and $\ga_2(t) := r+acr -are^{it}$, $t\in [-\alpha, \alpha]$ with  $c :=\cos \alpha$ and $a=\tan(\theta)$ such that $ac<1$.
	\begin{center}
		\begin{tikzpicture}
			\draw[->] (-3,0) -- (8,0);
			\draw (8,0) node[right] {$x$};
			\draw [->] (0,-5) -- (0,5);
			\draw (0,5) node[above] {$y$};
			\draw (-0.2,0) node[below] {$O$};
			
			\draw [dashed](3,0) circle (1.5);
			\draw (3,0) node {$\bullet$} ;
			\draw (4.25,0) node {$\bullet$} ;
			\draw (4.25,0) node[below] {$r$};
			\draw (2.7,0) node[below] {$r-acr$};
			\draw [dashed](5.5,0) circle (1.5);
			\draw (5.5,0) node {$\bullet$} ;
			\draw (5.6,0) node[below] {$r+acr$};
			\draw (3,0) --(4.3,0.85);
			\draw [->](0,0) --(4,5);
			\draw [dashed](4.25,0) --(4.25,0.85);
			\draw [->](0,0) --(4,-5);
			\draw [->](0.9,0) arc (0:45:1);
			\draw [->](3.5,0) arc (0:18:1);
			\draw (1.1,0.7) node[below] {$\theta$};
			\draw (3.7,0.4) node[below] {$\alpha$};
			\draw [->][black,thick,domain=-34:34] plot ({3+1.5*cos(\x)}, {1.5*sin(\x)});
			\draw [->][black,thick,domain=148:214] plot ({5.5+1.5*cos(\x)}, {1.5*sin(\x)});
			
			\draw (4.4,0.6) node[right] {$\gamma_1$};
			\draw (4.1,-0.6) node[left] {$\gamma_2$};
			\draw (4.5,4) node[below] {\fbox{$a=\tan \theta$}};
			\draw (4.5,3.3) node[below] {\fbox{$c=\cos \alpha$}};
			\draw (6.6,4) node[below] {\fbox{$0<ac<1$}};
		\end{tikzpicture}
	\end{center}

	\begin{align*}
	f(r) &= \frac{1}{2i\pi} \int_\ga \frac{f(z)}{z-r}dz\cr
	&= \frac{1}{2i\pi} \left( \int_{-\alpha}^\alpha \frac{f(r-arc + are^{it})iare^{it}}{-arc + are^{it}} dt + \int_{-\alpha}^\alpha \frac{f(r+arc - are^{it})iare^{it}}{arc - are^{it}} dt \right)
	\end{align*}
	then
	\begin{align*}
	&\|f(r)\|^{p^s_{q}} \leq \frac{1}{(2\pi)^{p^s_{q}}} \left( \int_{-\alpha}^\alpha \frac{\|f(r-arc + are^{it})\|}{ 1-c } dt + \int_{-\alpha}^\alpha \frac{\|f(r+arc-are^{it})\|}{1-c} dt \right)^{p^s_{q}}\cr
	&\leq\frac{2^{{p_{s,q}}-1}}{\left(2\pi(1-c)\right)^{p_{s,q}}}\left(\left(\int_{-\alpha}^\alpha \|f(r-arc + are^{it})\| dt\right)^{p_{s,q}}+\left(\int_{-\alpha}^\alpha \|f(r+arc - are^{it})\| dt\right)^{p^s_{q}}   \right)\cr
	&\leq  \frac{(4\alpha)^{{p_{s,q}}-1}}{(2\pi(1-c))^{p_{s,q}}}\left(\int_{-\alpha}^\alpha \|f(r-arc + are^{it})\|^{p_{s,q}} dt+\int_{-\alpha}^\alpha \|f(r+arc - are^{it})\|^{p_{s,q}} dt\right)
	\end{align*}
	Now we shall estimate the first integral in the above inequality, and the second one can be estimated in a similar way. We set $\psi(t,r)=(r-arc+ar\cos t, ar \sin t)$ then the Jacobian of $\psi$ satisfies
	\begin{align*}
	|J_\psi| &=\left|\begin{matrix}
	1-ac+a\cos t & -ar\sin t \\
	a\sin t      & ar\cos t
	\end{matrix} \right| = ar((1- ac) \cos t + a)\cr
	&\geq ar(c(1-ac) + a) \cr
	\end{align*}
	Since $x= r+ar(\cos t -c) < r(1+a(1-c))$, we have
	\begin{align*}
	|J_\psi| \geq ax \frac{c(1-ac) +a}{1+a(1-c)} := c_1 x
	\end{align*}
	then
	$$
	\int_0^R\int_{-\alpha}^\alpha \|f(\psi(t,r)) \|^{p_{s,q}} dt dr = \int \int_M \frac{\|f(x+iy) \|^{p_{s,q}}}{|J_\psi|}dx dy
	$$
	where $M:=\psi((0,R)\times [-\alpha,\alpha])$ is contained in $M':=\{ x+iy\in \C; 0<x<R+\delta \text{ whith } \delta := R(1-c)a \text{, and } |y|\leq ar \sin \alpha < ax \sin \alpha \}$ . This inclusion and H$\ddot{o}$lder inequality imply that
	\begin{align*}
	\int& \int_M \frac{\|f(x+iy) \|^{p_{s,q}}}{|J_\psi|}dx dy \leq \frac{1}{c_1} \int \int_{M'} \frac{\|f(x+iy) \|^{p_{s,q}}}{x}dx dy\cr
	& \leq \frac{1}{c_1} \left(\int \int_{M'} \|f(x+iy) \|^{{p_{s,q}}s'}\right)^\frac{1}{s'} \left(\int \int_{M'} x^{-s}\right)^\frac{1}{s} \cr
	& \leq \frac{1}{c_1} \left(\int \int_{M'} \|f(x+iy) \|^{{p_{s,q}}s'}\right)^\frac{1}{s'} \left(\int_0^{R+\delta} x^{-s} \int_0^{ax\sin \alpha}2 dy dx \right)^\frac{1}{s} \cr
	& \leq \frac{(2a\sin \alpha)^\frac{1}{s}}{c_1} \|f\|_{B^{{p_{s,q}}s'}_{\thh,X}}^{p_{s,q}}  \left(\int_0^{R+\delta} x^{1-s} dx \right)^\frac{1}{s}
	\leq \frac{(2a\sin \alpha)^\frac{1}{s}}{c_1} \|f\|_{B^{{p_{s,q}}s'}_{\thh,X}}^{p^s_{q}}  \frac{(R+\delta)^{\frac{2}{s}-1}}{(2-s)^\frac{1}{s}}\cr
	& = (2a\sin \alpha)^\frac{1}{s}  \frac{1+a(1-c)}{a(a+c(1-ac))} \frac{(R(1+(1-c)a))^{\frac{2-s}{s}}}{(2-s)^\frac{1}{s}}\|f\|_{B^{{p^s_{q}}s'}_{\thh,X}}^{p_{s,q}} \cr
	& :=\tilde{C} R^{\frac{2-s}{s}} \|f\|_{B^{{p_{s,q}}s'}_{\thh,X}}^{p_{s,q}}.
	\end{align*}
	where the constant $\tilde{C}$ does not depend on $R$.\\
	Finally we have
	$$
	\left(\int_0^R \|f(t)\|^{p_{s,q}} dt \right)^\frac{1}{{p_{s,q}}} \leq C_{R} \|f\|_{B^{q}_\thh}
	$$
	with $C_R\to 0$ as $R\to 0$.\\
	The case of $\theta=\frac{\pi}{2}$ follows easily due to the fact that the space $B_{\theta,X}^q$ is decreasing  with respect to $\theta$ with continuous injection.
\end{proof}
\begin{remark}\label{pqs}
	Given $q,l>1$, there always exists $s_{q,l}\in (1,2)$ such that: $$1<p_{s_{q,l}}:=\displaystyle{\frac{q(s_{q,l}-1)}{s_{q,l}}}\le l.$$ In fact, if $q\in (1,l]$ the assertion is trivial. Now if $q\in [2l, \infty)$ we have $\displaystyle{\frac{q}{q-l}}\in (1,2)$. Hence all $s_{q,l}\in  (1,\displaystyle{\frac{q}{q-l}}]$ satisfy the required estimation. Finally, for $q\in (l,2l]$ we have $2\le\displaystyle{\frac{q}{q-l}}$. Thus all $s\in (1,2)$ will satisfy the estimation. The fact that the space of $l$-admissible operators is decreasing with respect to the exponent $l$, the discussion above shows that a sufficient condition to have the required $p_{s_{q,l}}$-admissibility for $A$ in Theorem \ref{good-thm} is in fact the $l$-admissibility for some $l> 1.$
\end{remark}
Now we state the first result of this paper:
\begin{theorem}\label{good-thm}
	Let $X$ be a UMD space and that $a(\cdot)\in B_{\theta}^q$ for some $q>2$ and $\theta\in (0,\pi/2]$ and $F\in\calL(D(\A),X)$ is a ${l_0}$-admissible observation operator for $\A$ for some $l_0\in (1, \infty)$. If both $\A$ and $\displaystyle\frac{d}{dz}$ have the maximal $L^p$-regularity in $X$ and $B_{\theta,X}^q$  respectively, then $\mathfrak{A}$ has the maximal $L^p$-regularity on $\mathscr{X}^q$. Moreover, if $p\in (1,l_0]$ and $z$ is the solution of the problem \eqref{IACP}, then there exists $C_p>0$ independent of  $f\in L^p([0,T],X)$ such that
	\begin{align}\label{ESTIMR}
	\|\dot{z}\|_{L^p([0,T],X} + \|\A z\|_{L^p([0,T],X} +\|z\|_{L^p([0,T],X)} \leq C_p \|f\|_{L^p([0,T],X)}.
	\end{align}
\end{theorem}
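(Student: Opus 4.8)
The plan is to realise $\mathfrak{A}$ as a Miyadera perturbation of the decoupled operator $\mathscr{A}$ and to transport maximal regularity from $\mathscr{A}$ to $\mathfrak{A}=\mathscr{A}+\mathscr{P}$ by means of Theorem \ref{theorem:autonomous Miyadera}. Since $X$ is UMD and both $\A$ and $\frac{d}{dz}$ are assumed to enjoy maximal $L^p$-regularity, Lemma \ref{L1} already gives that the diagonal operator $\mathscr{A}$ of \eqref{calAcalP} has maximal $L^p$-regularity on $\mathscr{X}^q$. Thus everything reduces to checking that the off-diagonal perturbation $\mathscr{P}$ is $l$-admissible for $\mathscr{A}$ for some $l\in(1,\infty)$; once this is established, Theorem \ref{theorem:autonomous Miyadera} delivers the maximal $L^p$-regularity of $\mathfrak{A}$ on $\mathscr{X}^q$, and the estimate \eqref{ESTIMR} is extracted from \eqref{max_reg_estim}.

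The heart of the argument is this admissibility of $\mathscr{P}$, and it is precisely here that the exponents must be balanced. Given $q>2$ and $l_0>1$, Remark \ref{pqs} furnishes $s:=s_{q,l_0}\in(1,2)$ for which $l:=p_{s,q}=\frac{q(s-1)}{s}$ satisfies $1<l\le l_0$. Because the class of $l$-admissible observation operators shrinks as $l$ increases, the assumed $l_0$-admissibility of $F$ forces $F$ to be $l$-admissible too. I would then run the estimate of Lemma \ref{L2} at the exponent $l$, bounding $\int_0^\alpha\|\mathscr{P}\mathscr{T}(t)(\begin{smallmatrix}x\\f\end{smallmatrix})\|^l\,dt$ by $2^{l-1}\big(\|a\|_{B_\theta^q}^l\gamma^l\|x\|_X^l+\int_0^\alpha\|f(t)\|^l\,dt\big)$. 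The first term is already of the desired shape; the decisive step is to absorb $\int_0^\alpha\|f(t)\|^l\,dt$ into the Bergman norm $\|f\|_{B_{\theta,X}^q}^l$, which is exactly Lemma \ref{lemma:bergman_estimate} applied with $p_{s,q}=l$ and $R=\alpha$. Combining the two bounds gives $\int_0^\alpha\|\mathscr{P}\mathscr{T}(t)(\begin{smallmatrix}x\\f\end{smallmatrix})\|^l\,dt\le C\,\|(\begin{smallmatrix}x\\f\end{smallmatrix})\|_{\mathscr{X}^q}^l$, i.e. $\mathscr{P}$ is $l$-admissible for $\mathscr{A}$ with $l>1$.

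It remains to turn the abstract estimate \eqref{max_reg_estim} for $\mathfrak{A}$ into \eqref{ESTIMR}. For $f\in L^p([0,T],X)$ with $p\in(1,l_0]$, the inhomogeneity $\zeta$ of \eqref{zeta-function} lies in $L^p([0,T],\mathscr{X}^q)$ with $\|\zeta\|_{L^p(\mathscr{X}^q)}=\|f\|_{L^p(X)}$, and the solution is $\varrho=(\begin{smallmatrix}z\\g\end{smallmatrix})$. Reading the first components of $\varrho$ and $\dot\varrho$ yields $\|z\|_{L^p}$ and $\|\dot z\|_{L^p}$ directly from \eqref{max_reg_estim}. For $\|\A z\|_{L^p}$ I would \emph{not} use the first entry of $\mathfrak{A}\varrho$, which by \eqref{matrix-opetra1} equals $\A z+\delta_0 g$, but rather the identity $\mathscr{A}\varrho=\mathfrak{A}\varrho-\mathscr{P}\varrho$: since $\mathscr{P}$ is a Miyadera perturbation whose admissibility constant can be made $<1$ on a short interval (the constant $C_R$ of Lemma \ref{lemma:bergman_estimate} tends to $0$ as $R\to 0$, as does the $F$-admissibility constant), the domains $D(\mathfrak{A})$ and $D(\mathscr{A})$ coincide with equivalent graph norms. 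Hence $\|\mathscr{A}\varrho\|_{L^p}\lesssim\|\mathfrak{A}\varrho\|_{L^p}+\|\varrho\|_{L^p}\lesssim\|f\|_{L^p}$, and since $\A z$ is the first diagonal component of $\mathscr{A}\varrho$ we get $\|\A z\|_{L^p}\le\|\mathscr{A}\varrho\|_{L^p}\lesssim\|f\|_{L^p}$. Summing the three bounds gives \eqref{ESTIMR}; the hypothesis $p\le l_0$ is what guarantees $p$-admissibility of $F$ and thereby keeps the off-diagonal terms controllable at the regularity exponent $p$.

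I expect the main obstacle to be the exponent bookkeeping in the admissibility step. The Bergman trace estimate of Lemma \ref{lemma:bergman_estimate} is available only at the single exponent $p_{s,q}$, and one must arrange \emph{simultaneously} that $p_{s,q}>1$ (so that Theorem \ref{theorem:autonomous Miyadera} applies with genuine $l>1$) and that $p_{s,q}\le l_0$ (so that the hypothesised $l_0$-admissibility of $F$ can be invoked); Remark \ref{pqs}, which exploits $q>2$, is exactly what reconciles these two constraints. A secondary technical point is the passage from the block estimate for $\mathfrak{A}\varrho$ to the genuine estimate for $\A z$, where the cleanest device is the graph-norm equivalence $D(\mathfrak{A})=D(\mathscr{A})$ coming from the smallness $C_R\to 0$ of the Miyadera constant.
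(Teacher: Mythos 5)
Your proof of the first assertion (maximal $L^p$-regularity of $\mathfrak{A}$ on $\mathscr{X}^q$) is essentially the paper's own: the splitting \eqref{split-A}, Lemma \ref{L1} for the diagonal part, Remark \ref{pqs} to manufacture an exponent $l=p_{s_{q,l_0}}\in(1,l_0]$, and Lemmas \ref{L2} and \ref{lemma:bergman_estimate} to get $l$-admissibility of $\mathscr{P}$, followed by Theorem \ref{theorem:autonomous Miyadera}. (The paper applies that theorem at the exponent $p_{s_{q,l_0}}$ and then invokes Dore's result to return to $p$; your direct application at $p$ is legitimate, since the admissibility exponent $l$ and the regularity exponent $p$ are decoupled in the statement of Theorem \ref{theorem:autonomous Miyadera}. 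The difference is immaterial.) Where you genuinely diverge is in deriving \eqref{ESTIMR}. The paper works with the first component of $\mathfrak{A}\varrho$, which is $\A z+g(\cdot,0)$, and must therefore estimate $\|g(\cdot,0)\|_{L^p}$ by hand: it uses the Volterra representation $g(t,0)=\int_0^t a(t-s)Fz(s)\,ds$, Lemma \ref{lemma:bergman_estimate} to place $a$ in $L^p([0,T])$, the $p$-admissibility of $F$ for $\A$ (this is exactly where the hypothesis $p\le l_0$ enters) together with \cite[Prop 3.3]{Hadd} to control $\|Fz\|_{L^p}$, and finally an absorption argument that forces $T$ to be small. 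That route produces explicit constants and bounds on $\|Fz\|_{L^p}$ and on the convolution term as by-products, but as written it covers only small $T$; arbitrary $T$ requires an iteration the paper leaves implicit. Your route, $\|\A z\|_{L^p}\le\|\mathscr{A}\varrho\|_{L^p}\le C\bigl(\|\mathfrak{A}\varrho\|_{L^p}+\|\varrho\|_{L^p}\bigr)$, bypasses all of this: it is shorter, imposes no smallness on $T$, and never uses admissibility of $F$ at the exponent $p$, so it actually proves \eqref{ESTIMR} for every $p\in(1,\infty)$ rather than only $p\in(1,l_0]$.

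One repair is needed in your justification of that last step. The equivalence of the graph norms of $\mathfrak{A}$ and $\mathscr{A}$ does not follow from making the Miyadera/admissibility constant small: admissibility is an integral condition along the semigroup and does not yield a pointwise relative bound less than one that could be absorbed. The correct (and softer) argument is: $D(\mathfrak{A})=D(\mathscr{A})$ by the definition of the splitting \eqref{split-A}; $\mathscr{A}$ is closed because it is a generator; $\mathfrak{A}$ is closed because Theorem \ref{theorem:autonomous Miyadera} (indeed, already the Miyadera--Voigt generation theorem underlying it) makes $\mathfrak{A}$ the generator of a semigroup, and in fact its maximal $L^p$-regularity forces it to generate an analytic semigroup; finally, two closed operators defined on the same domain of a Banach space have equivalent graph norms, by the closed graph theorem applied to the identity map between the two graph-norm Banach spaces. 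With this substitution your argument is complete and correct.
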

\begin{proof}
	The proof uses Theorem \ref{theorem:autonomous Miyadera} and the decomposition $\mathfrak{A}=\mathscr{A}+\mathscr{P}$ given in \eqref{split-A}. Lemma \ref{L1} shows that the operator $\mathscr{A}$ has the maximal $L^p$-regularity on $\mathscr{X}^q$. Let $s_{q,l_0}$ and $p_{s_{q,l_0}}$ as in Remark \ref{pqs}. Now by combining Lemma \ref{L2} and Lemma \ref{lemma:bergman_estimate}, it is clear that the operator $\mathscr{P}$ is $p_{s_{q,l_0}}$-admissible for $\mathscr{A}$. Appealing to Theorem \ref{theorem:autonomous Miyadera}, the operator $\mathfrak{A}$ also enjoys the maximal $L^{p_{s_{q,l_0}}}$-regularity on $\mathscr{X}^q$. It is well known that if an operator has maximal $L^p$-regularity for some $p\in (1, \infty)$, then it has maximal $L^p$-regularity for all $p\in (1, \infty)$ (see for instance \cite{Dore}) and hence $\mathfrak{A}$ has the maximal $L^p$-regularity on $\mathscr{X}^q$. Thus there is a constant $C_p>0$ such that
	$$
	\|\dot{\varrho}\|_{L^p([0,T],\mathscr{X}^q)} + \|\mathfrak{A}\varrho\|_{L^p([0,T],\mathscr{X}^q)} +\|\varrho\|_{L^p([0,T],\mathscr{X}^q)}\leq C_p\|f\|_{L^p([0,T],X)}.
	$$
	Since
	$
	\mathfrak{A}\varrho(t) = \left(\begin{smallmatrix} \A z(t) + g(t,0)\\ \Upsilon z(t) + \displaystyle{\frac{dg(t,\cdot)}{dz}} \end{smallmatrix}\right)
	\quad\text{
		and}
	\quad
	\dot{\varrho}(t) = \left(\begin{smallmatrix} \dot{z}(t)\\ \dot{g}(t,\cdot) \end{smallmatrix}\right),
	$
	we have
	\begin{align*}
	&\|\dot{z}\|_{L^p([0,T],X)} + \|\A z\|_{L^p([0,T],X)} +\|z\|_{L^p([0,T],X)}\cr
	&\leq \|\dot{z}\|_{L^p([0,T],X)} + \| \A z + g(\cdot,0)\|_{L^p([0,T],X)} + \| g(\cdot, 0)\|_{L^p([0,T],X)} +\|z\|_{L^p([0,T],X)}\cr
	&\leq C\|f\|_{L^p([0,T],X)} + \| g(\cdot, 0)\|_{L^p([0,T],X)}.
	\end{align*}
	On the other hand, we have
	$
	\dot{g}(t,\cdot) = \Upsilon z(t) + \displaystyle{\frac{dg(t,\cdot)}{dz}},
	$
	hence
	$$
	g(t,\cdot) = \Ss(t) g(0,\cdot) + \int_0^t \Ss(t-s) a(\cdot)Fz(s) ds.
	$$
	Thus
	\begin{align*}
	\| g(\cdot, 0)\|_{L^p([0,T],X)}^p &= \int_0^T \| g(t, 0)\|_X^p dt\cr
	&= \int_0^T\|\int_0^t a(t-s)Fz(s)\|^p\cr
	&\leq T^{p-1} \int_0^T \int_0^T |a(t-s)|^p \|Fz(s)\|^p ds dt\cr
	&= T^{p-1} \int_0^T \int_s^T |a(t-s)|^p \|Fz(s)\|^p dt ds\cr
	&= T^{p-1} \int_0^T \left(\int_s^T |a(t-s)|^pdt\right) \|Fz(s)\|^p ds \cr
	&= T^{p-1} \int_0^T \left(\int_0^{T-s} |a(t)|^pdt\right) \|Fz(s)\|^p ds \cr
	&\leq T^{p-1}  \left(\int_0^{T} |a(t)|^pdt\right) \int_0^T \|Fz(s)\|^p ds \cr
	&\leq T^{p-1} C_T^p \|a\|_{B^q_{\theta}}^p   \int_0^T \|Fz(s)\|^p ds.
	\end{align*}
	where $C_T>0$ is the constant in Lemma \ref{lemma:bergman_estimate}.\\ Note that $z(t) \in D(\A)$ for almost every $t\in [0, T]$, due to the maximal $L^p$-regularity of $\mathfrak{A}$. This shows that
	\begin{align*}
	\| g(\cdot, 0)\|_{L^p([0,T],X} \leq T^{\frac{p-1}{p}} C_T \|a\|_{B^q_{\theta}}  \|Fz\|_{L^p([0,T],X)}.
	\end{align*}
	It suffices only to estimate $\|Fz\|_{L^p([0,T],X)}$ by $\|f\|_{L^p([0,T],X)}$. In fact, we know that
	$$
	\dot{z}(t)= \A z(t) +g(t,0)+f(t),\quad t\in [0,T],
	$$
	which gives
	$$
	Fz(t)= F\int_0^t \T(t-s)g(s,0)ds + F\int_0^t \T(t-s)f(s)ds,\quad \text{ a.e. }t\in [0,T].
	$$
	Keeping in mind that the space of $l$-admissible operators is decreasing with respect to the exponent $l$ and that $p\le l_0$ implies that
	$F$ is $p$-admissible for $\mathbb{A}$. Therefore, by using \cite[Prop 3.3]{Hadd},
	\begin{align*}
	\|Fz\|_{L^p([0,T],X)}&\leq  \ga_T \|g(\cdot,0)\|_{L^p([0,T],X)} + \ga_T \|f\|_{L^p([0,T],X)}\cr
	&\leq \ga_T T^{\frac{p-1}{p}} C_T \|a\|_{B^q_{\theta}}  \|Fz\|_{L^p([0,T],X)} + \ga_T \|f\|_{L^p([0,T],X)},
	\end{align*}
	where $\ga_T>0$ is the constant satisfying $\ga_T\to 0$ as $T\to 0$. Hence, if we choose $T$ such that $\beta_T :=\ga_T T^{\frac{p-1}{p}} C_T \|a\|_{B^q_{\theta}}<1$, we obtain
	$$
	\|Fz\|_{L^p([0,T],X)}\leq \frac{\ga_T}{1-\beta_T}  \|f\|_{L^p([0,T],X)}.
	$$
	Hence we have
	$$
	\|\dot{z}\|_{L^p([0,T],X)} + \| \A z\|_{L^p([0,T],X)} +\|z\|_{L^p([0,T],X)} \leq C \|f\|_{L^p([0,T],X)}.
	$$
\end{proof}
\begin{remark}  In contrast to Barta's result and as for Cauchy problem, we have obtained the estimation \eqref{ESTIMR}. One wonders if the method used in \cite{Barta2} can prove the aforementioned estimate.
\end{remark}
\begin{example}
	In this section we investigate the maximal $L^p$-regularity of the following Volterra integro-differential heat equation involving a fractional power of the Laplace operator
	
	\begin{equation}\label{Dirichlet}
	\begin{cases}
	\dot{u}(t,x) = (\Delta + P) u(t,x)+\displaystyle\int_0^t  \beta e^{-\gamma(t-s)}(-\Delta)^\al u(s,x)ds +f(t) , & \mbox{ } t\in [0,T], x\in \Omega \\
	u(t,x) = 0, & \mbox{} t\in [0,T], x\in\partial\Omega\\
	u(0,x) = 0, & \mbox{} x\in \Omega
	\
	\end{cases}
	\end{equation}
	where $\Omega\in \mathbb{R}^n$ is a bounded Lipschitz domain and $\al \in (0,1/2]$ and $\beta, \gamma >0$. It is an example of anomalous equation of diffusion type.
	Let us first verify that the following Dirichlet-Laplacian operator defined on suitable $L^r(\Omega)$ by
	$$D(\Delta^D_r)= W^{2.r}(\Om) \cap W_0^{1,r}(\Om),$$
	$$\Delta^D_r=\Delta,$$
	for certain domains $\Omega$ and a range of exponents $r$, has the maximal $L^p$-regularity. For $n\ge 2$ and $1<r\le 2$, it is shown in \cite{Wood07} that for an $\Omega$ satisfying a uniform outer ball condition, the operator $\Delta^D_r$ generates a positive, contractive and exponentially stable $C_0$-semigroup on $L^r(\Omega)$ enjoying the maximal $L^2$-regularity. Therefore, due to Kalton and Weis' result \cite[ Corollary 5.2]{KW01},  the Dirichlet operator $-\Delta^D_r$ admits a bounded $H^{\infty}(\Sigma_\theta )$ functional calculus with $\theta <\pi/2$. The fact that $L^r(\Omega)$ is of cotype $2$, thanks to \cite[Theorem 4.2]{LeMerdy}, the fractional power $(-\Delta^D_r)^{1/2}$ is $2$-admissible for $\Delta^D_r$. Now assume that the unbounded operator $P$ satisfies the following resolvent estimate
	$P:D(\Delta^D_r)\to L^r(\Omega)$ such that
	$$
	\|\sqrt{\la}PR(\la,\Delta^D_r) \|\leq M
	$$
	for some constant  $M>0$ and for all $\la >0$.\\
	In view of \cite[Theorem 4.1]{LeMerdy}, the operator $P$ is $2$-admissible for $\Delta^D_r$. Now Theorem 2.2 show that $\Delta^D_r+P$ enjoys the maximal $L^2$-regularity on  $L^r(\Omega)$. Since both $P$ and $(-\Delta^D_r)^{1/2}$ are $2$-admissible for $\Delta^D_r$, we deduce that $(-\Delta^D_r)^{1/2}$ is $2$-admissible for $\Delta^D_r+P$ (see. \cite{Hadd}) and in virtue of Theorem \ref{good-thm} and Remark \ref{pqs} we conclude that the problem \eqref{Dirichlet} has maximal $L^2$-regularity $L^r(\Omega)$ and the estimation \eqref{ESTIMR} takes place in particular for $p=2$.\\
	For a bounded Lipschitz (or convex) domain and $n\ge 3$, similar result can now be also obtained for the Neumann-Laplacian defined on $L^r(\Omega)$ by
	$$D(\Delta^N_r)= \left\{ u\in  W^{2.r}(\Om):\frac{\partial u}{\partial \nu}=0 \; on\; \partial \Omega\right\}. $$
	$$\Delta^N_r=\Delta.$$
	Indeed, in virtue of \cite[Theorem 6.4]{Wood07} and by proceeding in a very similar way as for Dirichlet boundary conditions we obtain the maximal $L^p$-regularity result for the above integro-differential equation with Neumann-Laplacian. For $\al \in(0,\displaystyle\frac{1}{2})$ the result follows in a similar way since analyticity shows that $(-\Delta^D_r)^{\al}$ is always $2$-admissible for $\Delta^D_r$.
\end{example}

\section{Maximal regularity for boundary Volterra integro-differential equations}\label{sec:volterra with BC}
Let $X,U$ and $Z$ be Banach spaces such that $Z\subset X$ continuously and densely. Let $A_m:D(A_m):=Z\to X$ be a closed linear operator, and let $F:Z\to X,$  $G,K:Z\to U$ be linear operator.

The object of this section is to investigate the maximal $L^p$-regularity of the problem \eqref{InDiffwBC}
\begin{align*}
\begin{cases}
\dot{z}(t)=A_m z(t)+\displaystyle\int_0^t a(t-s) F z(s) ds + f(t),& t\in [0,T],\cr
z(0)=0,\cr Gz(t)=Kz(t),& t\in [0,T].
\end{cases}
\end{align*}
We introduce the linear operator
\begin{align}\label{closed}
\A:=A_m,\qquad D(\A)=\left\{x\in Z:Gx=Kz\right\}.
\end{align}
and we set $\calP:= Fi$ where $i$ the continuous injection from $D(\A)$  to $Z$. Then the equation \eqref{InDiffwBC} is similar to the equation \eqref{IACP} with $\Upsilon := a(\cdot)\calP$. Thus, to prove maximal $L^p$-regularity of \eqref{InDiffwBC} it suffices to find conditions for which $\A$ has maximal regularity and $\Upsilon$ is admissible for $\A$.

Throughout this section we assume that the operator $G$ is surjective and
\begin{align*}
A:=(A_m)_{|D(A)} \quad\text{with}\quad D(A):=\ker(G),
\end{align*}
generates a strongly continuous semigroup $T:=(T(t))_{t\ge 0}$ on $X$. We denote by $\rho(A)$ the resolvent set of $A$, $R(\la,A)=(\la-A)^{-1},\;\la\in\rho(A),$ the resolvent operator of $A$. We also consider a new norm on $X$ defines by $\|x\|_{-1}:=\|R(\mu,A)x\|$ for $x\in X$ and $\mu\in\rho(A)$ (this norm is independent of the choice of $\mu,$ due to the resolvent equation). We denote by $X_{-1}$ the completion of $X$ with respect to the norm $\|\cdot\|_{-1},$ which a Banach space, called the extrapolation space associated with $X$ and $A$. We have the following continuous and dense embedding $D(A)\subset X\subset X_{-1}$. The semigroup $T$ is extended to another strongly continuous semigroup $T_{-1}:=(T_{-1}(t))_{t\ge 0}$ on $X_{-1},$ whose generator $A_{-1}:X\to X_{-1}$ is the extension of $A$ to $X$, see \cite[Chapter II]{EngNag}.

According to Greiner \cite{Grei}, for each $\la\in\rho(A),$ the restriction of $G$ to $\ker(\la-A_m)$ is invertible with inverse $\D_\la$ (called the Dirichlet operator) given by
\begin{align*}
\D_\la:=\left(G_{|\ker(\la-A_m)}\right)^{-1}\in\calL(U,X).
\end{align*}
Now, define the operators
\begin{align*}
&B:=(\la-A_{-1})\D_\la\in\calL(U,X_{-1}),\quad\la\in\rho(A),\cr & C:=Kj\in\calL(D(A),U),\cr
&\Pp:=Fj\in\calL(D(A),X)
\end{align*}
where $j$ is the continuous injection from $D(A)$ to $Z$. Due to the resolvent equation, the operator $B$ is independent of $\la$.

We also need the operators
\begin{align*}
\Phi_t u:=\int^t_0 T_{-1}(t-s)Bu(s)ds,\qquad t\ge 0,\quad u\in L^p([0,+\infty),U).
\end{align*}
This integral takes its values in $X_{-1}$. However, by using an integration by parts, for any $t\ge 0$ and $u\in W^{2,p}_{0,t}(U),$ where
\begin{align*}
W^{2,p}_{0,t}(U):=\left\{u\in W^{2,p}([0,t],U):u(0)=\dot{u}(0)=0\right\},
\end{align*}
we have
\begin{align*}
\Phi_t u=\D_0 u(t)-\int^t_0 T(t-s)\D_0 \dot{u}(s)ds\in Z,
\end{align*}
where we assumed that $0\in\rho(A)$ (without loss of generality).

Thus the following operator is well defined
\begin{align*}
(\F u)(t)=K \Phi_t u,\qquad u\in W^{2,p}_{0,t}(U),\quad t\ge 0.
\end{align*}
Here, we make the following assumption
\begin{itemize}
  \item [{\bf (H)}] The triple $(A,B,C)$ is $p$-regular, $p\in (1,\infty)$, with $I_U:U\to U$ as an admissible feedback. That is the following assertions hold:
  \begin{enumerate}
    \item $C$ is a $p$-admissible observation operator for $A$ (see Definition \ref{obs-admi}),
    \item $B$ is a $p$-admissible control operator for $A$. This means that there exists $\tau>0$ such that $\Phi_{\tau} u\in X$ for any $u\in L^p([0,+\infty),U),$
    \item For any $\tau>0,$ there exists $\kappa:=\kappa(\tau)>0$ such that
    \begin{align*}
    \|\F u\|_{L^p([0,\tau],U)}\le \kappa \|u\|_{L^p([0,\tau],U)},\qquad \forall u\in W^{2,p}_{0,\tau}(U).
    \end{align*}
    (hence we can extend $\F$ to a bounded operator on $L^p([0,\tau],U)$ for any $\tau>0$).
    \item The following limit exists in $U$ for any $v\in U$:
    \begin{align*}
    \lim_{h\to 0^+}\frac{1}{h}\int^h_0 \left(\F (\1_{\R^+} v)\right)(s)ds=0.
    \end{align*}
    \item $1\in\rho(\F)$.
  \end{enumerate}
\end{itemize}
We have the following observations about the condition {\bf(H)}.
\begin{remark}\label{remark-about-H}
(i) If $C$ is bounded from $X$ to $U$ (hence $C=K\in\calL(X,U)$), and $B$ is $p$-admissible for $A,$ then the condition {\bf(H)} is satisfied.\\
(ii)  Assume that $A$ has the maximal $L^p$-regularity for $p\in (1,\infty),$ $B$ is  $p$-admissible for $A,$ and $K=(-A)^{\al}$ for $\al\in (0,\frac{1}{p})$. Then the condition {\bf(H)} is verified, see \cite{AmBoDrHa1}. 
\end{remark}

The first part of the following result was obtained in \cite{HaddManzoRhandi}, while the second part is taken from \cite{AmBoDrHa2}.
\begin{theorem}\label{2015-2021}
  Let the condition {\bf(H)} be satisfied. Then $Z\subset D(C_{\Lambda})$, $C_{\Lambda} = K$ on $Z$ and the operator $\A$ defined by \eqref{closed} generates a strongly continuous semigroup $\T:=(\T(t))_{t\ge 0}$ on $X$ given by
  \begin{equation}\label{WS-VCF}
  	\T(t)x_0 = T(t)x_0 + \int_0^t T_{-1}(t-s)BC_{\Lambda}\T(s)x_0 ds \qquad \qquad x_0\in X\ ,\ t \geq 0.
  \end{equation}
   Furthermore, if in addition $T$ is an analytic semigroup, then it is so for $\T$.
\end{theorem}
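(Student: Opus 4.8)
The plan is to identify $\A$ with the closed-loop generator obtained by applying the identity output feedback to the $p$-regular triple $(A,B,C)$, and then to read off \eqref{WS-VCF} as the associated variation-of-constants formula. I would start from the Greiner-type decomposition: fixing $\la\in\rho(A)$, every $x\in Z$ splits as $x=\D_\la Gx+R(\la,A)(\la-A_m)x$ with $R(\la,A)(\la-A_m)x\in D(A)=\ker G$. Applying $\la-A_{-1}$ and using $B=(\la-A_{-1})\D_\la$ together with $(\la-A_{-1})R(\la,A)=I$ on $X$ gives the key identity
\begin{equation*}
A_m x=A_{-1}x+B\,Gx,\qquad x\in Z.
\end{equation*}
On $D(\A)=\{x\in Z:Gx=Kx\}$ this reads $\A x=A_{-1}x+B\,Kx$, exhibiting $\A$ as the part in $X$ of $A_{-1}+BK$ subject to the coupling $Gx=Kx$.

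Next I would prove the first assertion, $Z\subset D(C_\Lambda)$ and $C_\Lambda=K$ on $Z$. On the regular summand $R(\la,A)(\la-A_m)x\in D(A)$ one has $C_\Lambda=C=Kj$ directly from the definition of the Yosida extension, so the task reduces to computing $C_\Lambda\D_\la v$ for $v\in U$. Here I would use the Dirichlet resolvent identity $(s-\la)R(s,A)\D_\la v=\D_\la v-\D_s v$, which yields $K\,sR(s,A)\D_\la v=\frac{s}{s-\la}\big(K\D_\la v-K\D_s v\big)$; since $K\D_s v$ is the transfer function of $(A,B,C)$ evaluated at $s$, the regularity condition {\bf(H)}(4) (zero feedthrough) forces $K\D_s v\to 0$ as $s\to\infty$, whence $C_\Lambda\D_\la v=K\D_\la v$. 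Combining the two summands gives $C_\Lambda=K$ on all of $Z$, and hence $\A x=A_{-1}x+BC_\Lambda x$ on $D(\A)$.

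At this point $\A$ is exactly the generator produced by closing the loop on $(A,B,C)$ with $I_U$ as admissible feedback, which is the setting of the generation theorem of \cite{HaddManzoRhandi}; since {\bf(H)} supplies precisely its hypotheses (admissibility of $B$ and $C$, the regularity condition, and $1\in\rho(\F)$), I would invoke it to conclude that $\A$ generates a $C_0$-semigroup $\T$ on $X$, and the closed-loop problem $\dot z=A_{-1}z+BC_\Lambda z$, $z(0)=x_0$, put into mild form, is exactly \eqref{WS-VCF}. For the analyticity statement I would solve $(\la-\A)x=y$ via the decomposition and the coupling $Gx=Kx$, which gives the resolvent formula
\begin{equation*}
R(\la,\A)y=R(\la,A)y+\D_\la\big(I-K\D_\la\big)^{-1}C_\Lambda R(\la,A)y,
\end{equation*}
valid wherever $I-K\D_\la$ is invertible. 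Assuming $T$ analytic, $\|\la R(\la,A)\|$ is bounded on a sector, and combining the admissibility estimates controlling $\|\D_\la\|$ and $\|C_\Lambda R(\la,A)\|$ with {\bf(H)}(4)--{\bf(H)}(5) (so that $K\D_\la\to 0$ and $(I-K\D_\la)^{-1}$ stays uniformly bounded as $\la\to\infty$ in the sector) yields the sectorial bound on $R(\la,\A)$, i.e.\ analyticity of $\T$; this is the route of \cite{AmBoDrHa2}.

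The delicate point is the second step together with its quantitative counterpart in the last step: identifying the Yosida limit $C_\Lambda$ with the continuous operator $K$ on $Z$, and then keeping the feedback factor $(I-K\D_\la)^{-1}$ uniformly bounded in a sector. Both hinge on the regularity part of {\bf(H)}, and both require the admissibility hypotheses to be used carefully in order to pass the limit through $K$ and to preserve sectoriality under the boundary perturbation $BC_\Lambda$.
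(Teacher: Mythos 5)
The paper itself gives no proof of this theorem: both parts are imported as known results, the generation statement together with $Z\subset D(C_{\Lambda})$, $C_{\Lambda}=K$ on $Z$ from \cite{HaddManzoRhandi}, and the analyticity statement from \cite{AmBoDrHa2}. Your sketch reconstructs precisely the arguments of those references --- the Greiner identity $A_m x=A_{-1}x+BGx$ on $Z$, the identification of $\A$ with the closed-loop generator for the feedback $I_U$, the use of zero feedthrough (condition {\bf(H)}(4)) to compute $C_{\Lambda}\D_\la$, and the sectorial estimate via $R(\la,\A)=R(\la,A)+\D_\la(I-K\D_\la)^{-1}KR(\la,A)$ --- and you invoke those same references at exactly the points where the paper does, so your route coincides with the paper's (purely citational) treatment.
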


Now we state the following lemma.
\begin{lemma}\label{lemma:admissibility for calA}
	If $(A, B, \Pp)$ generates a $p$-regular linear system, then $\calP$ is a $p$-admissible observation for $\A$ whenever condition \textbf{(H)} is satisfied.
\end{lemma}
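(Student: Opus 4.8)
The plan is to feed the variation of constants formula \eqref{WS-VCF} for the closed-loop semigroup $\T$ into the Lebesgue extension $\Pp_\Lambda$ of $\Pp$ and to estimate the two resulting output terms separately. First I would fix $x_0\in D(\A)\subset Z$; since $\T(t)x_0\in D(\A)\subset Z$ for all $t\ge 0$, one has $\calP\T(t)x_0=F\T(t)x_0=\Pp_\Lambda\T(t)x_0$, so the object to bound is precisely $\|\Pp_\Lambda\T(\cdot)x_0\|_{L^p([0,\alpha],X)}$. Denoting by $\F_\Pp$ the input-output map of the triple $(A,B,\Pp)$, namely $(\F_\Pp u)(t)=\Pp_\Lambda\Phi_t u$, the $p$-regularity of $(A,B,\Pp)$ together with \eqref{WS-VCF} gives, in $L^p([0,\alpha],X)$,
\begin{equation*}
\Pp_\Lambda\T(\cdot)x_0=\Pp_\Lambda T(\cdot)x_0+\F_\Pp\big(C_\Lambda\T(\cdot)x_0\big),
\end{equation*}
where Theorem \ref{2015-2021} (recall $C_\Lambda=K$ on $Z$) identifies the feedback input as $u(\cdot)=C_\Lambda\T(\cdot)x_0$.

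Next I would dispatch the two terms. The free term is handled by the $p$-admissibility of $\Pp$ for $A$, which is part of the $p$-regularity of $(A,B,\Pp)$, giving $\|\Pp_\Lambda T(\cdot)x_0\|_{L^p([0,\alpha],X)}\le\gamma_\Pp\|x_0\|$. The forced term is handled by the boundedness of $\F_\Pp$ on $L^p([0,\alpha],U)$ (again part of the $p$-regularity of $(A,B,\Pp)$, the analogue of \textbf{(H)}(3) with $\Pp$ in place of $K$), giving $\|\F_\Pp(C_\Lambda\T(\cdot)x_0)\|_{L^p([0,\alpha],X)}\le\|\F_\Pp\|\,\|C_\Lambda\T(\cdot)x_0\|_{L^p([0,\alpha],U)}$. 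At this point the whole estimate reduces to controlling the closed-loop output $\|C_\Lambda\T(\cdot)x_0\|_{L^p([0,\alpha],U)}$ by $\|x_0\|$, i.e. to proving that $C$ is $p$-admissible for the closed-loop generator $\A$ and not merely for $A$.

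This closed-loop admissibility is the step I expect to be the main obstacle, and I would extract it from condition \textbf{(H)} through a fixed-point argument. Applying $C_\Lambda$ to \eqref{WS-VCF} and writing $y(\cdot):=C_\Lambda\T(\cdot)x_0$, the definition of the input-output map $\F$ of $(A,B,C)$ produces the identity $y=C_\Lambda T(\cdot)x_0+\F y$ in $L^p([0,\alpha],U)$, that is $(I-\F)y=C_\Lambda T(\cdot)x_0$. Here the right-hand side lies in $L^p([0,\alpha],U)$ by the $p$-admissibility of $C$ for $A$ (\textbf{(H)}(1)), while $1\in\rho(\F)$ (\textbf{(H)}(5)) makes $I-\F$ boundedly invertible on $L^p([0,\alpha],U)$; hence
\begin{equation*}
\|C_\Lambda\T(\cdot)x_0\|_{L^p([0,\alpha],U)}=\|(I-\F)^{-1}C_\Lambda T(\cdot)x_0\|_{L^p([0,\alpha],U)}\le\|(I-\F)^{-1}\|\,\gamma_C\,\|x_0\|.
\end{equation*}
Combining the three estimates yields a constant $\gamma>0$ with $\|\calP\T(\cdot)x_0\|_{L^p([0,\alpha],X)}\le\gamma\|x_0\|$ for every $x_0\in D(\A)$, which is exactly the asserted $p$-admissibility of $\calP$ for $\A$. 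Aside from the closed-loop admissibility, the only delicate points to verify are that $\Pp_\Lambda$ and $C_\Lambda$ may legitimately be inserted into \eqref{WS-VCF} — guaranteed by the $p$-regularity of $(A,B,\Pp)$ and by \textbf{(H)} respectively — and that $\calP$ coincides with $\Pp_\Lambda$ along the orbit $\T(\cdot)x_0\subset Z$.
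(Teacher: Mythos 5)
Your proposal is correct and follows essentially the same route as the paper's proof: both insert the variation-of-constants formula \eqref{WS-VCF} into the Yosida extension $\Pp_{\Lambda}$, bound the free term $\Pp_{\Lambda}T(\cdot)x_0$ by the $p$-admissibility of $\Pp$ for $A$, and bound the convolution term by the $L^p$-boundedness of the input-output map of $(A,B,\Pp)$ applied to the closed-loop output $C_{\Lambda}\T(\cdot)x_0$. The only difference is that the paper simply quotes the $p$-admissibility of $C_{\Lambda}$ for the closed-loop semigroup $\T$ as a known consequence of condition \textbf{(H)} (feedback theory), whereas you derive it explicitly from \textbf{(H)}(1) and \textbf{(H)}(5) via the identity $(I-\F)\,C_{\Lambda}\T(\cdot)x_0=C_{\Lambda}T(\cdot)x_0$, which makes your write-up slightly more self-contained but is not a genuinely different argument.
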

\begin{proof}
	Condition \textbf{(H)} asserts that $\A$ is a generator of a $C_0$-semigroup $\T$ given by \eqref{WS-VCF}.
	Now, we first remark that $Z\subset D(\Pp_{\Lambda})$ and $P=\Pp_{\Lambda}$ on $Z,$ where  $\Pp_{\Lambda}$ denotes the Yosida extension of $\Pp$ w.r.t. $A$. Let $x\in D(\A)$ and $\al>0$. The facts that $(A,B,\Pp)$ is $p$-regular and $C_\Lambda$ is a $p$-admissible observation for $\T$ imply respectively that
	\begin{align}\label{hhhhh}
		&\int^t_0 T_{-1}(t-s)BC_\Lambda \T(s)x\in D(\Pp_{\Lambda})\quad\text{a.e.}\;t\ge 0,\;\text{and}\cr & \left\| \Pp_{\Lambda}\int^{\cdot}_0 T_{-1}(t-s)BC_\Lambda \T(s)x\right\|_{L^p([0,\al],X)} \le \beta_\al \|x\|,
	\end{align}
	where $\beta_\al>0$ is a constant. On the other hand, by \eqref{WS-VCF}, we have
	\begin{align*}
		\calP\T(t)x&=\Pp_{\Lambda}\T(t)x,\cr &=\Pp_{\Lambda}T(t)x+\Pp_{\Lambda}\int^{t}_0 T_{-1}(t-s)BC_\Lambda \T(s)x.
	\end{align*}
	Hence $p$-admissibility of $\calP$ for $\A$ follows by \eqref{hhhhh} and  $p$-admissibility of $\Pp$ for $A$.
\end{proof}

The following theorem is a result of  maximal $L^p$-regularity for the operator $\A$ in the case of bounded perturbation $K$, see Remak \ref{remark-about-H} (i), Theorem \ref{2015-2021}, and \cite{AmBoDrHa1}.

\begin{theorem}\label{theorem:Desch-Schappacher for MR}
	If $B$ is a $p$-admissible control operator for $A$ and $K$ is bounded, then if  $A$ has maximal $L^p$-regularity, then so has $\A$.
\end{theorem}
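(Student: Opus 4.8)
The plan is to convert the boundary problem for $\A$ into the maximally regular problem for $A$ by solving explicitly for the boundary output $y=Ku$ and reconstructing the state through the input map $\Phi_t$. Here ``$K$ bounded'' means $K\in\calL(X,U)$, so by Remark \ref{remark-about-H}(i) the hypotheses ($B$ $p$-admissible, $K$ bounded) make condition \textbf{(H)} hold; Theorem \ref{2015-2021} then provides the generator $\A$ together with its variation-of-constants formula \eqref{WS-VCF}, in which $C_\Lambda=K$ on $Z$. Since maximal $L^p$-regularity of $A$ forces $T$ to be analytic, Theorem \ref{2015-2021} also makes $\T$ analytic, so smooth data yield strong solutions and I can argue by a density-plus-uniform-estimate scheme.

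For the core estimate I would take $f\in C_c^\infty((0,T),X)$ (dense in $L^p$) and let $u$ be the strong solution of $\dot u=\A u+f$, $u(0)=0$; then $u\in C^1\cap D(\A)$ and, because $f(0)=0$, the output $y:=Ku$ satisfies $y(0)=\dot y(0)=0$, hence $y\in W^{2,p}_{0,t}(U)$. Writing $v(t)=\int_0^t T(t-s)f(s)\,ds$ for the mild solution attached to $A$, Duhamel's principle applied to \eqref{WS-VCF} (with $C_\Lambda=K$) gives $u=v+\Phi_\cdot y$. I then insert the integration-by-parts representation of the excerpt,
\[
\Phi_t y=\D_0 y(t)-\int_0^t T(t-s)\D_0\dot y(s)\,ds,
\]
which is exactly what trades the derivative lost by the boundary operator $B$ for an honest $X$-valued forcing $\D_0\dot y$ on which the maximal regularity of $A$ can act.

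The decisive step is a uniform bound $\|\dot y\|_{L^p([0,T],U)}\le C\|f\|_{L^p([0,T],X)}$. Applying $K$ to $u=v+\Phi_\cdot y$ and using $\F=K\Phi_\cdot$ gives the closed-loop identity $y=Kv+\F y$; since $\F$ is a causal convolution operator it commutes with $\tfrac{d}{dt}$ on functions vanishing at $0$, so differentiating yields $(I-\F)\dot y=K\big(\tfrac{d}{dt}v\big)=K\big(\int_0^\cdot AT(\cdot-s)f(s)\,ds+f\big)$. Condition \textbf{(H)}(5) makes $I-\F$ invertible on $L^p([0,T],U)$, and the maximal $L^p$-regularity of $A$ bounds $\int_0^\cdot AT(\cdot-s)f(s)\,ds$ by $\|f\|_{L^p}$; hence $\dot y$, and likewise $y$, is controlled by $f$ with a constant independent of $f$.

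Feeding this back closes the estimate: from the representation, $\tfrac{d}{dt}\Phi_\cdot y=-\int_0^\cdot AT(\cdot-s)\D_0\dot y(s)\,ds$ and $\A u=\int_0^\cdot AT(\cdot-s)f(s)\,ds-\int_0^\cdot AT(\cdot-s)\D_0\dot y(s)\,ds$, both in $L^p([0,T],X)$ with norm $\lesssim\|f\|_{L^p}$ by maximal regularity of $A$ and the bound on $\dot y$; the same ingredients control $\dot u$ and $u$. Thus $u\in W^{1,p}([0,T],X)\cap L^p([0,T],D(\A))$ with the required estimate on a dense set, and closedness of $\A$ together with uniqueness of the strong solution extends it to all $f\in L^p$. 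I expect the main obstacle to be precisely this uniform control of $\dot y$: because $B$ is only admissible it loses one derivative into $X_{-1}$, so without the causal commutation of $\F$ with $\tfrac{d}{dt}$ and the invertibility $1\in\rho(\F)$ the bound would be circular. A cleaner but less general alternative would be to pass to the adjoint, where $BK$ becomes the observation-type perturbation $K^{*}B^{*}$ and Theorem \ref{theorem:autonomous Miyadera} applies directly, at the cost of assuming $X$ reflexive.
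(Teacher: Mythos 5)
Your proof is correct, but it takes a genuinely different route from the paper's. The paper does not write out a proof of Theorem \ref{theorem:Desch-Schappacher for MR} at all: it assembles the statement by citation, namely Remark \ref{remark-about-H}(i) (boundedness of $K$ plus $p$-admissibility of $B$ give condition \textbf{(H)}), Theorem \ref{2015-2021} (generation of $\T$ and the formula \eqref{WS-VCF}), and the Desch--Schappacher-type perturbation theorem of \cite{AmBoDrHa1}, under which $\A$ is identified with the part of $A_{-1}+BK$ in $X$ and maximal $L^p$-regularity is transferred as a black box. You instead re-derive that transfer inside the boundary-system formalism: the Duhamel identity $u=v+\Phi_\cdot y$ (a legitimate Fubini computation in $X_{-1}$), the closed-loop equation $y=Kv+\F y$, the commutation of the causal convolution $\F$ with $\frac{d}{dt}$ on functions vanishing at $0$ (justified here because, for $f\in C_c^\infty((0,T),X)$, the solution vanishes identically near $t=0$, so $y\in W^{2,p}_{0,t}(U)$), the invertibility $1\in\rho(\F)$ from \textbf{(H)}(5), and the $W^{2,p}_{0,t}$ integration-by-parts representation of $\Phi_t$, followed by two applications of the maximal regularity estimate for $A$ (once on $f$, once on $\D_0\dot y$) and a standard density-plus-closedness passage to general $f\in L^p$. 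Each of these steps checks out. What the paper's route buys is brevity and modularity; what yours buys is a self-contained argument using only objects displayed in this paper, which makes explicit where each item of \textbf{(H)} enters and yields the estimate with traceable constants --- indeed, with $K$ bounded, $1\in\rho(\F)$ is even automatic, since admissibility of $B$ gives $\|\F\|<1$ on small intervals and causality propagates invertibility, so your proof uses nothing beyond the stated hypotheses. One caution on your closing aside: dualizing so as to apply Theorem \ref{theorem:autonomous Miyadera} to $K^*B^*$ requires more than reflexivity of $X$, since maximal $L^p$-regularity does not pass to adjoints in general Banach spaces; it is good that you kept this as an alternative remark rather than the proof.
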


The following result (see \cite[Thm.4]{AmBoDrHa2}) gives conditions implying the maximal $L^p$-regularity for $\A$ when the state space $X$ is a UMD space. The result is based on the concept of $\calR$-boundedness.

\begin{theorem}\label{theorem:Staffans-Weiss for MR}
	Let $X$ be an UMD space, $p,p'\in (1,\infty)$ such that $\frac{1}{p} + \frac{1}{p'} = 1$ and the condition {\textbf{(H)}} be satisfied. Assume that there exists $\om>\max\{\om_0(A), \om_0(\A)\}$ such that the sets $\{s^\frac{1}{p} R(\om+is, A_{-1})B, s\neq 0 \}$ and $\{s^\frac{1}{p'} CR(\om+is, A), s\neq 0 \}$ are $\calR$-bounded. If $A$ has maximal $L^p$-regularity, then $\A$ has the same property.
\end{theorem}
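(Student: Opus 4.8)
The plan is to combine the generation statement of Theorem \ref{2015-2021} with the operator-valued Fourier multiplier characterization of maximal $L^p$-regularity on UMD spaces. Since $A$ has maximal $L^p$-regularity its semigroup $T$ is analytic, so by the last assertion of Theorem \ref{2015-2021} the closed-loop semigroup $\T$ generated by $\A$ is analytic as well; in particular there is $\om>\max\{\om_0(A),\om_0(\A)\}$ with $\om+i\R\subset\rho(A)\cap\rho(\A)$. On a UMD space, maximal $L^p$-regularity of $\A$ is equivalent (after the shift by $\om$) to the $\calR$-boundedness of the family $\{(\om+is)R(\om+is,\A):s\in\R\setminus\{0\}\}$ by Weis's theorem (see \cite{KunstmannWeis}). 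Thus the whole problem reduces to producing this $\calR$-bound out of the two $\calR$-boundedness hypotheses on $B$ and $C$.

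First I would write down the closed-loop resolvent identity. Setting $\mathbf{G}(\la):=C_\Lambda R(\la,A_{-1})B$ for the transfer function and using that condition \textbf{(H)} provides an admissible identity feedback with $1\in\rho(\F)$, the Salamon--Weiss feedback formula gives, for $\Re\la=\om$,
\[
R(\la,\A)=R(\la,A)+R(\la,A_{-1})B\,(I_U-\mathbf{G}(\la))^{-1}\,C_\Lambda R(\la,A),
\]
where the last item of \textbf{(H)} guarantees that $I_U-\mathbf{G}(\la)$ is boundedly invertible on $U$ along $\Re\la=\om$. Multiplying by $\la=\om+is$ and inserting the split $1=\tfrac1p+\tfrac1{p'}$, the second summand becomes
\[
\frac{\om+is}{s}\,\big[s^{1/p}R(\om+is,A_{-1})B\big]\,(I_U-\mathbf{G}(\om+is))^{-1}\,\big[s^{1/p'}C R(\om+is,A)\big].
\]

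Now I would invoke the calculus of $\calR$-bounds. The free term $\{(\om+is)R(\om+is,A)\}$ is $\calR$-bounded because $A$ has maximal $L^p$-regularity (the forward direction of Weis's theorem). In the product term the scalar factor $\tfrac{\om+is}{s}$ is uniformly bounded, and the two bracketed families are $\calR$-bounded precisely by hypothesis (with $C_\Lambda R(\om+is,A)=CR(\om+is,A)$ on the range of the resolvent). Since products of $\calR$-bounded families and multiplication by a bounded scalar family preserve $\calR$-boundedness, it only remains to show that the middle factor $\{(I_U-\mathbf{G}(\om+is))^{-1}:s\neq0\}$ is $\calR$-bounded; once this is in hand, the product term, and hence $\{(\om+is)R(\om+is,\A)\}$, is $\calR$-bounded, and the converse direction of Weis's theorem yields maximal $L^p$-regularity of $\A$.

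The main obstacle is exactly this last point: upgrading $1\in\rho(\F)$ --- an invertibility statement about the input-output map on $L^p$ --- into a uniform $\calR$-bound for the feedback inverse, rather than mere norm-boundedness. I would first argue that $\{\mathbf{G}(\om+is)\}$ is itself $\calR$-bounded, since it is the transfer function $C_\Lambda R(\om+is,A_{-1})B$ obtained from the two hypothesized families through the resolvent identity for $C_\Lambda R(\cdot,A_{-1})B$, and that by the limit condition in \textbf{(H)} it is strictly proper, i.e.\ $\mathbf{G}(\om+is)\to0$ as $|s|\to\infty$. On any bounded interval $0<|s|\le S_0$ the map $s\mapsto(I_U-\mathbf{G}(\om+is))^{-1}$ is norm-continuous with values in the resolvent set, hence relatively norm-compact and therefore $\calR$-bounded; on the strictly-proper tail one would run an $\calR$-bounded Neumann series. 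Making this tail estimate rigorous --- passing from the decay of $\mathbf{G}(\om+is)$ to a genuinely small $\calR$-bound of the tail family, rather than a small norm bound --- is the delicate step, and it is here that the UMD structure of $X$ together with the full strength of the $p$-regularity in \textbf{(H)} is genuinely used.
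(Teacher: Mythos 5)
First, a point of comparison: the paper does not prove this theorem at all --- it is imported verbatim from \cite[Thm.~4]{AmBoDrHa2} --- so your attempt has to stand entirely on its own. Your skeleton (Weis's $\calR$-boundedness characterization of maximal $L^p$-regularity on UMD spaces, the Staffans--Weiss closed-loop resolvent formula $R(\la,\A)=R(\la,A)+R(\la,A_{-1})B(I_U-\mathbf{G}(\la))^{-1}C_\Lambda R(\la,A)$, and the exponent splitting $s=s^{1/p}s^{1/p'}$) is a natural reduction, but the step you yourself flag as delicate is a genuine gap, and the route you propose for closing it fails at its first move. Condition \textbf{(H)}(4) is an Abel/Ces\`aro-type limit expressing that the system is regular with zero feedthrough; it yields $\mathbf{G}(\la)\to 0$ as $\la\to+\infty$ along the \emph{positive real axis}, not along the vertical line $\om+i\R$. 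Transfer functions of well-posed systems are bounded on right half-planes but in general do not decay along vertical lines: the delay-type transfer function $e^{-\la}$ is the standard example, with $|e^{-(\om+is)}|=e^{-\om}$ for every $s$. So the family $\{\mathbf{G}(\om+is)\}$ need not be ``strictly proper'' in your sense, and the Neumann series for the tail never gets started. Even if norm decay did hold, smallness of the individual norms does not give a small $\calR$-bound for the infinite tail family, which is what a Neumann-series argument for $\calR$-bounds requires; nothing in \textbf{(H)} or in the two hypothesized families supplies this.

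Two further steps are unjustified. (a) The claim that $\{\mathbf{G}(\om+is)\}$ is $\calR$-bounded ``through the resolvent identity'' does not work as stated: writing $\mathbf{G}(\om+is)-\mathbf{G}(\mu)=(\mu-\om-is)\bigl[CR(\om+is,A)\bigr]\bigl[R(\mu,A_{-1})B\bigr]$ with $\mu$ fixed, the hypothesis controls only the first bracket, giving an $\calR$-bound of order $|s|\cdot|s|^{-1/p'}=|s|^{1/p}$, which grows; the alternative of using $\frac{d}{d\la}\mathbf{G}(\la)=-\bigl[CR(\la,A)\bigr]\bigl[R(\la,A_{-1})B\bigr]$ gives an $\calR$-bound $O(1/|s|)$ for the derivative, which is not integrable at infinity, so the bounded-variation criterion does not close it either. (b) ``Relatively norm-compact, therefore $\calR$-bounded'' is false in general Banach (even UMD) spaces: in $\ell^p$ with $p>2$, the operators $T_n=(\log(n+1))^{-1}e_n^*\otimes f$ tend to $0$ in norm, yet $\{T_n\}\cup\{0\}$ is not $\calR$-bounded (test against $x_n=e_n$); on compact parameter intervals you should instead invoke the integrable-derivative criterion for the analytic resolvent. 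Finally, $\frac{\om+is}{s}$ is not uniformly bounded near $s=0$ (harmless, but it must be handled by the compact-interval argument, not waved through). The upshot is that your proposal identifies a correct reduction but does not prove the theorem: converting the time-domain $L^p$-invertibility $1\in\rho(\F)$ and the two resolvent $\calR$-bounds into control of the closed-loop resolvent is precisely the hard analytic content, and a complete proof (as in \cite{AmBoDrHa2}, on which the paper relies) must exploit $1\in\rho(\F)$ where it lives --- as boundedness of $(I-\F)^{-1}$ on $L^p([0,\tau],U)$ acting on the feedback representation \eqref{WS-VCF} --- rather than attempt a pointwise-in-frequency inversion of $I_U-\mathbf{G}(\om+is)$.
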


Let us now prove the maximal $L^p$-regularity for the boundary Volterra integro-differential equations \eqref{InDiffwBC}. We start with the following special case of bounded boundary perturbation $K$.
\begin{theorem}\label{theorem:MR for Volterra with Desch-Schappacher}
	Let $X$ be a UMD space and $q\in (1,\infty)$. Assume that $a(\cdot)\in B_{\theta}^q$ for some $\theta\in (0, \displaystyle\frac{\pi}{2}]$, $B$ is an $l_0$-admissible control operator for $A$ for some $l_0\in (1,\infty)$, $K\in \calL(X, U)$ and $(A, B, \Pp)$ generates an $l_0$-regular linear system. If both $A$ and $\displaystyle\frac{d}{dz}$ have maximal $L^p$-regularity on $X$ and $B^q_{\theta, X}$ respectively for some $p\in (1, \infty)$, then $\mathfrak{A}$ has maximal $L^p$-regularity on $\mathfrak{X}^q$. Moreover, if $p\in (1, l_0]$ then there exists a constant $C_p>0$ independent of $f\in L^p([0,T],X)$ such that the solution $z$ of \eqref{InDiffwBC} satisfies
	\begin{equation}
	\|\dot{z}\|_{L^p([0,T],X)} + \|A_m z\|_{L^p([0,T],X)} + \|z\|_{L^p([0,T],X)}\leq C_p \|f\|_{L^p([0,T],X)}.
	\end{equation}
\end{theorem}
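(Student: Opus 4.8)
The plan is to recognize that \eqref{InDiffwBC} has already been cast, via $\A:=A_m$ with domain \eqref{closed} and $\Upsilon:=a(\cdot)\calP$, into exactly the abstract form \eqref{IACP} treated in Section \ref{sec:autonomous volterra witout BC}; consequently $\mathfrak{A}$ on $\mathfrak{X}^q$ is the matrix operator of \eqref{matrix-opetra1} built from this boundary $\A$ and this $\Upsilon$. Theorem \ref{good-thm} then yields at once both the maximal $L^p$-regularity of $\mathfrak{A}$ and the estimate \eqref{ESTIMR}, once its two structural hypotheses are checked for the present data: that $\A$ has maximal $L^p$-regularity on $X$, and that $\calP$ is $l_0$-admissible for $\A$. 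The whole proof therefore reduces to verifying these two facts and then quoting Theorem \ref{good-thm}.

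First I would settle the generation and regularity of $\A$. Because $K\in\calL(X,U)$ is bounded and $B$ is $l_0$-admissible for $A$, Remark \ref{remark-about-H}(i) shows that condition \textbf{(H)} holds (with exponent $l_0$). Theorem \ref{2015-2021} then gives that $\A$ generates a strongly continuous semigroup $\T$ represented by the variation-of-constants formula \eqref{WS-VCF}. Since $A$ has maximal $L^p$-regularity and this property is independent of $p\in(1,\infty)$, Theorem \ref{theorem:Desch-Schappacher for MR} applies with the bounded feedback $K$ and the admissible $B$ to conclude that $\A$ itself enjoys maximal $L^p$-regularity on $X$.

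Next I would verify the admissibility of the perturbation. The hypothesis that $(A,B,\Pp)$ generates an $l_0$-regular linear system, combined with condition \textbf{(H)} just established, is precisely what Lemma \ref{lemma:admissibility for calA} requires; it delivers that $\calP$ is an $l_0$-admissible observation operator for $\A$. At this stage $X$ is UMD, $\tfrac{d}{dz}$ has maximal $L^p$-regularity on $B^q_{\theta,X}$, $a(\cdot)\in B^q_\theta$, $\A$ has maximal $L^p$-regularity, and $\calP$ is $l_0$-admissible for $\A$---exactly the hypotheses of Theorem \ref{good-thm} with $F$ replaced by $\calP$. Applying that theorem (whose proof fixes the admissibility exponent through Lemma \ref{lemma:bergman_estimate} and Remark \ref{pqs}, and then invokes the Miyadera-type perturbation Theorem \ref{theorem:autonomous Miyadera}) shows that $\mathfrak{A}$ has maximal $L^p$-regularity on $\mathfrak{X}^q$. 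For $p\in(1,l_0]$ the same theorem supplies the estimate \eqref{ESTIMR}; since $\A=A_m$ on $D(\A)$ we have $\A z=A_m z$, so \eqref{ESTIMR} is literally the asserted bound for the solution $z$ of \eqref{InDiffwBC}.

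The main obstacle is the admissibility step: translating the well-posed linear systems data---the regularity of the triple $(A,B,\Pp)$, the Yosida extension $\Pp_\Lambda$, and the feedback formula \eqref{WS-VCF}---into the single integral estimate defining $l_0$-admissibility of $\calP$ for the perturbed generator $\A$. This is exactly the content packaged by Lemma \ref{lemma:admissibility for calA}, so the remaining effort is bookkeeping: ensuring condition \textbf{(H)} is in force (which is where boundedness of $K$ is essential, via Remark \ref{remark-about-H}(i)) and tracking the exponents so that the $l_0$-admissibility of $\calP$ for $\A$ feeds correctly into Theorem \ref{good-thm} and, for $p\le l_0$, into the absorption argument that produces \eqref{ESTIMR}.
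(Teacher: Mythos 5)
Your proposal is correct and follows essentially the same route as the paper's own proof: maximal regularity of $\A$ via Theorem \ref{theorem:Desch-Schappacher for MR}, $l_0$-admissibility of $\calP$ via Lemma \ref{lemma:admissibility for calA}, and then Theorem \ref{good-thm} for both the maximal $L^p$-regularity of $\mathfrak{A}$ and the estimate. Your explicit verification of condition \textbf{(H)} through Remark \ref{remark-about-H}(i) and Theorem \ref{2015-2021} is a detail the paper leaves implicit, but it is the same argument.
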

\begin{proof}
	By Theorem \ref{theorem:Desch-Schappacher for MR}, $l_0$-admissibility of $B$ and maximal regularity of $A$ imply that $\A$ has maximal $L^p$-regularity. By Lemma \ref{lemma:admissibility for calA}, the $l_0$-regularity of the system generated by $(A, B, \Pp)$ asserts that $\calP$ is an $l_0$-admissible observation operator for $\A$. These facts together with maximal $L^p$-regularity of $\displaystyle\frac{d}{dz}$ imply, by Theorem \ref{good-thm}, that $\mathfrak{A}$ has maximal $L^p$-regularity. The estimate follows by the same theorem. This ends the proof.
\end{proof}

Now, we suppose that $K$ is unbounded and state the second main theorem of this paper.

\begin{theorem}\label{theorem:MR for Volterra with Staffans-Weiss}
	Let $X$ be a UMD space and $q\in (1,\infty)$. Assume that $a(\cdot)\in B_{\theta}^q$ for some $\theta\in (0, \displaystyle\frac{\pi}{2}]$, $(A, B, C)$ and $(A, B, \Pp)$ generate $l_0$-regular linear systems for some $l_0\in (1,\infty)$ and the identity $I_U$ is an admissible feedback for the system generated by $(A, B, C)$. If both $A$ and $\displaystyle\frac{d}{dz}$ have maximal $L^p$-regularity on $X$ and $B^q_{\theta, X}$ respectively for some $p\in (1, \infty)$ and the sets $\{s^\frac{1}{p} R(is, A_{-1})B, s\neq 0 \}$ and $\{s^\frac{1}{p'} CR(is, A), s\neq 0 \}$ are $\calR$-bounded, then $\mathfrak{A}$ has maximal $L^p$-regularity on $\mathfrak{X}^q$. Moreover, if $p\in (1, l_0]$ then there exists a constant $C_p>0$ independent of $f\in L^p([0,T],X)$ such that the solution $z$ of \eqref{InDiffwBC} satisfies
	\begin{equation}
	\|\dot{z}\|_{L^p([0,T],X)} + \|A_m z\|_{L^p([0,T],X)} + \|z\|_{L^p([0,T],X)}\leq C_p \|f\|_{L^p([0,T],X)}.
	\end{equation}
\end{theorem}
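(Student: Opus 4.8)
The argument will run in complete parallel to the proof of Theorem \ref{theorem:MR for Volterra with Desch-Schappacher}, the sole structural change being that the bounded-feedback input (Theorem \ref{theorem:Desch-Schappacher for MR}), which demanded $K\in\calL(X,U)$, is now replaced by the $\calR$-boundedness input of Theorem \ref{theorem:Staffans-Weiss for MR}, which is designed to cope with an unbounded $K$. The reduction performed at the start of this section is what makes this possible: with $\A$ given by \eqref{closed} and $\calP:=Fi$, the boundary problem \eqref{InDiffwBC} is an instance of \eqref{IACP} with $\Upsilon=a(\cdot)\calP$, so it suffices to verify the hypotheses of Theorem \ref{good-thm} for the pair $(\A,\calP)$ and then read off both the maximal regularity of $\mathfrak{A}$ and the estimate.

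First I would note that the standing assumptions already furnish condition \textbf{(H)}: the hypothesis that $(A,B,C)$ generates an $l_0$-regular linear system admitting $I_U$ as an admissible feedback is exactly \textbf{(H)} at the exponent $l_0$, and by the usual monotonicity of these $p$-regularity conditions in the exponent it persists for every $p\le l_0$. Consequently Theorem \ref{2015-2021} applies and $\A$ generates the $C_0$-semigroup $\T$ given by \eqref{WS-VCF}. Since $X$ is UMD, $A$ has maximal $L^p$-regularity, and the sets $\{s^{1/p}R(is,A_{-1})B,\ s\neq0\}$ and $\{s^{1/p'}CR(is,A),\ s\neq0\}$ are $\calR$-bounded, Theorem \ref{theorem:Staffans-Weiss for MR} then yields that $\A$ itself enjoys maximal $L^p$-regularity. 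The only cosmetic discrepancy is that Theorem \ref{theorem:Staffans-Weiss for MR} evaluates the resolvents at $\om+is$ for some $\om>\max\{\om_0(A),\om_0(\A)\}$ whereas the present hypotheses are stated at $\om=0$; since maximal $L^p$-regularity and $\calR$-boundedness are both stable under the scalar shift $A\mapsto A-\om$, one reduces to the stated form without loss of generality.

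With the maximal $L^p$-regularity of $\A$ secured, I would invoke Lemma \ref{lemma:admissibility for calA}: under condition \textbf{(H)}, the $l_0$-regularity of the system generated by $(A,B,\Pp)$ shows that $\calP$ is an $l_0$-admissible observation operator for $\A$. All the hypotheses of Theorem \ref{good-thm} are now in force for $(\A,\calP)$ playing the role of $(\A,F)$ --- namely $X$ UMD, $a(\cdot)\in B_\theta^q$ (the relevant regime being $q>2$), $\calP$ is $l_0$-admissible for $\A$, and both $\A$ and $\frac{d}{dz}$ have maximal $L^p$-regularity --- whence Theorem \ref{good-thm} immediately gives the maximal $L^p$-regularity of $\mathfrak{A}$ on $\mathfrak{X}^q$. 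For $p\in(1,l_0]$ the same theorem supplies the estimate \eqref{ESTIMR}; since $\A=A_m$ on $D(\A)$ by \eqref{closed} and the solution satisfies $z(t)\in D(\A)$ for a.e.\ $t$, we have $\A z=A_m z$ a.e.\ and the claimed inequality follows verbatim.

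The step I expect to require the most care is the bookkeeping of exponents rather than any new analysis. The $\calR$-boundedness hypotheses fix the exponent $p$ at which Theorem \ref{theorem:Staffans-Weiss for MR} returns the maximal $L^p$-regularity of $\A$, whereas the admissibility of $\calP$ furnished by Lemma \ref{lemma:admissibility for calA} and the Bergman estimate underlying Theorem \ref{good-thm} are most naturally exploited at an exponent in $(1,l_0]$. As in the proof of Theorem \ref{good-thm} (compare Remark \ref{pqs}), the two are reconciled by establishing maximal regularity at one convenient exponent and then propagating it to every $p\in(1,\infty)$ through the classical fact \cite{Dore} that maximal $L^p$-regularity does not depend on $p$. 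Apart from this bookkeeping and the harmless $\om$-shift noted above, the proof is a verbatim transcription of the bounded-$K$ case.
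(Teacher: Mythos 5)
Your proposal is correct and follows essentially the same route as the paper's own proof: apply Theorem \ref{theorem:Staffans-Weiss for MR} to get maximal $L^p$-regularity of $\A$, use Lemma \ref{lemma:admissibility for calA} to get $l_0$-admissibility of $\calP$ for $\A$, and then conclude (both the regularity of $\mathfrak{A}$ and the estimate) via Theorem \ref{good-thm}. Your additional remarks --- verifying condition \textbf{(H)} explicitly, reconciling the resolvents at $is$ versus $\om+is$ by a scalar shift, and the exponent bookkeeping via \cite{Dore} --- only make explicit points the paper's two-line proof leaves tacit.
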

\begin{proof}
	By Theorem \ref{theorem:Staffans-Weiss for MR}, the assumptions imply that $\A$ has maximal $L^p$-regularity. By Lemma \ref{lemma:admissibility for calA}, $l_0$-regularity of the system generated by $(A, B, \Pp)$ asserts that $\calP$ is an $l_0$-admissible observation operator for $\A$. Gathering these facts with maximal $L^p$-regularity of $\displaystyle\frac{d}{dz}$, we conclude by Theorem \ref{good-thm}, that $\mathfrak{A}$ has maximal $L^p$-regularity. The same theorem justifies the estimate which ends the proof.
\end{proof}


\begin{thebibliography}{99}
\addcontentsline{toc}{section}{Bibliographie}

	\bibitem{AmBoDrHa1}A. Amansag, H. Bounit, A. Driouich, S. Hadd On the maximal regularity for perturbed autonomous and non-autonomous evolution equations. J. Evol. Equ. 20, 165–190 (2020). https://doi.org/10.1007/s00028-019-00514-8
	\bibitem{AmBoDrHa2} A. Amansag, H. Bounit, A. Driouich, S. Hadd, 	Staffans-Weiss perturbations for Maximal $L^p$-regularity in Banach spaces, in press in J. Evol. Equ.
	\bibitem{ArRaFoPo} W. Arendt, R. Chill, S. Fornaro and C. Poupaud, $L^p$-maximal regularity for nonautonomous evolution equations, \emph{ J. Differ. Equations} \textbf{237}, 1--26, (2007).
	\bibitem{Barta1} T. B\'arta, Analytic solutions of Volterra equations via semigroups, \emph{Semigroup Forum} \textbf{76}, 1, 142--148, (2008).
	\bibitem{Barta2} T. B\'arta, On R-sectorial derivatives on Bergman spaces, \emph{ Bull. Austral. Math. Soc.} \textbf{77} , 305--313, (2008).
	\bibitem{Barta3} T. B\'arta, Smooth solutions of  Volterra equations via semigroups, \emph{ Bulletin of the Australian Mathematical Society}, \textbf{78}(2), (2008), 249--260. doi:10.1017/S0004972708000683
	\bibitem{Simon} L.De Simon,\emph{ Un’ applicazione della theoria degli integrali singolari allo studio delle equazioni differenziali lineare astratte del primo ordine},Rend. Sem. Mat., Univ. Padova \textbf{99} (1964), 205-223.
	\bibitem{DenHiePru}  R. Denk, M. Hieber, and J. Pruss, R-Boundedness, \emph{Fourier Multipliers and Problems of Elliptic and Parabolic Type}, Memoirs Amer. Math. Soc., vol. 166, Amer. Math. Soc., Providence, R.I., 2003.
	\bibitem{Dore} G. Dore, \emph{Maximal regularity in Lp spaces for an abstract Cauchy problem}. Adv. Differ. Equat. 5(1-3), 293-322 (2000).
	\bibitem{EngNag} K.-J. Engel and R. Nagel. \emph{One-Parameter Semigroups for Linear Evolution Equations}, Springer-Verlag, New York,Berlin,Heidelberg,2000.
	\bibitem{Grei} G. Greiner. \emph{Perturbing the boundary conditions of a generator}. Houston J. Math., 18:405-425, 2001.
	\bibitem{Hadd} S. Hadd. \emph{Unbounded perturbations of $C_0$-semigroups on Banach spaces and applications}. Semigroup Forum, 70:451-465, 2005.
	\bibitem{HaddManzoRhandi} S. Hadd, R. Manzo, A. Rhandi.\emph{ Unbounded perturbations of the generator domain}. Discrete \& Continuous Dynamical Systems - A, 2015, 35 (2) : 703-723. doi: 10.3934/dcds.2015.35.703
	\bibitem{HieMon} M. Hieber, S. Monniaux, Pseudo-differential operators and maximal regularity results for non-autonomous parabolic equations, \emph{ Proc. Amer. Math. Soc.} \textbf{128} 1047--1053, (2000).
	\bibitem{KW01} N.J. Kalton and L. Weis, The $H^\infty$ -calculus and sums of closed operators, \emph{ Math. Ann.} \textbf{321},319--345, (2001).
	\bibitem{KunstmannWeis}P. C. Kunstmann and L. Weis, \emph{Maximal $L^p$ regularity for parabolic equations, Fourier multiplier theorems and $H^\infty$ functional calculus}, Levico Lectures, Proceedings of the Autumn School on Evolution Equations and Semigroups (M. Iannelli, R. Nagel, S.Piazzera eds.), vol. 69, Springer Verlag, Heidelberg, Berlin, 2004, pp. 65-320
	\bibitem{KunWei} P. C. Kunstmann and L. Weis,\emph{ Perturbation theorems for maximal Lp-regularity},Ann. Scuola Norm. Sup. Pisa Cl. Sci. (4) 30 (2001), 415-435.
	\bibitem{Lions61} J.-L. Lions, Equations diff$\acute{e}$rentielles op$\acute{e}$rationnelles et probl$\grave{e}$mes aux limites, \emph{ Die Grundlehren der mathematischen Wissenschaften}, Bd. 111, Springer-Verlag, Berlin, (1961).
	\bibitem{LeMerdy} C. Le Merdy. The Weiss conjecture for bounded analytic semigroups. J. Lond. Math. Soc. 67(3):715–738, 2003.
	\bibitem{Pruss} J. Pr\"uss, Evolutionary Integral Equations and Applications, Monographs in Mathematics 87, Birkh\"auser, Basel, (1993).
	\bibitem{LutzWeis} L. Weis, \emph{A new approach to maximal L p -regularity}, Proc. 6th International Conference on Evolution Equations, G. Lumer and L. Weis, eds, Dekker, New York (2000), 195-214.
	\bibitem{LutzWeis2} L. Weis,\emph{ Operator-valued Fourier multiplier theorems and maximal Lp-regularity}, Math. Ann.,319 (2001), 735-758.
	\bibitem{Wood07}
	\newblock I. Wood, Maximal $L^p$-regularity for the Laplacian on Lipschitz domain, \emph{ Math.Z}, \textbf{255} , pp. 855--875, (2007).
	\bibitem{Zacher}  R. Zacher, Maximal regularity of type L p for abstract parabolic Volterra equations, \emph{J. evol. equ.} \textbf{5} , pp:79--103, (2005).

\end{thebibliography}
\end{document}